\newcommand\cC{{\mathcal C}}
\newcommand\cF{{\mathcal F}}
\newcommand{\floor}[1]{\left\lfloor{#1}\right\rfloor}
\newcommand{\ceil}[1]{\left\lceil{#1}\right\rceil}
\newcommand{\nchn}{\binom{n}{\lfloor n/2\rfloor}}
\newcommand\bC{\mathbf C}
\newcommand\cH{{\mathcal H}}
\newcommand\cP{{\mathcal P}}
\newcommand\bN{\textbf{N}}
\newcommand\bfly{\bowtie}
\newcommand\La{{\rm La}}
\theoremstyle{plain}
\newtheorem{theorem}{Theorem}%[section]
\newtheorem{lemma}[theorem]{Lemma}
\newtheorem{problem}[theorem]{Problem}
\theoremstyle{definition}
\newtheorem{defn}[theorem]{Definition}
\newtheorem{remark}[theorem]{Remark}
\newtheorem{rem}[theorem]{Remark}
\newcommand\tref[1]{Theorem~\ref{thm:#1}}
\newcommand\cref[1]{Corollary~\ref{cor:#1}}
\title{On generalized Tur\'an results in height two posets}
\author{J\'ozsef Balogh}
\address{University of Illinois at Urbana-Champaign and Moscow Institute of Physics and Technology}
\email{jobal@illinois.edu}
\thanks{Balogh's research is partially
supported by NSF grants DMS-1764123 and RTG DMS-1937241, the Arnold O. Beckman Research Award (UIUC
Campus Research Board RB 18132), the Langan Scholar Fund (UIUC), and the Simons Fellowship.}
\author{Ryan R. Martin}
\address{Iowa State University}
\email{rymartin@iastate.edu}
\thanks{Martin's research is partially supported by Simons Foundation Collaboration grants \#353292 and \#709641.}
\author{D\'aniel T. Nagy}
\address{Alfr\'ed R\'enyi Institute of Mathematics}
\email{nagydani@renyi.hu}
\thanks{Nagy's research is partially supported by NKFIH grants FK 132060, K 132696 and PD 137779 and by the J\'anos Bolyai Research Fellowship of the Hungarian Academy of Sciences}
\author{Bal\'azs Patk\'os}
\address{Alfr\'ed R\'enyi Institute of Mathematics and Moscow Institute of Physics and Technology}
\email{patkos@renyi.hu}
\thanks{Patk\'os's research is partially supported by NKFIH grants SNN 129364 and FK 132060.}
\keywords{generalized Tur\'an, butterfly-free poset, comparable pairs}
\subjclass[2020]{06A06,05D05}
\date{}
\begin{document}

\maketitle

\begin{abstract}
	For given posets $P$ and $Q$ and an integer $n$, the generalized Tur\'an problem for posets, asks for the maximum number of copies of $Q$ in a $P$-free subset of the $n$-dimensional Boolean lattice, $2^{[n]}$. 
    
	In this paper, among other results, we show the following:
	\begin{itemize}
		\item[(i)] 	For every  $n\geq 5$, the maximum number of $2$-chains in a butterfly-free subfamily of  $2^{[n]}$ is $\left\lceil\frac{n}{2}\right\rceil\binom{n}{\lfloor n/2\rfloor}$. 
		\item[(ii)] 	For every  fixed $s$, $t$ and $k$, a $K_{s,t}$-free family in $2^{[n]}$ has $O\left(n\binom{n}{\lfloor n/2\rfloor}\right)$ $k$-chains.
		\item[(iii)] 	For every $n\geq 3$, the maximum number of $2$-chains in an $\textbf{N}$-free family is $\binom{n}{\lfloor n/2\rfloor}$, where $\textbf{N}$ is a poset  on 4 distinct elements $\{p_1,p_2,q_1,q_2\}$ for which $p_1 < q_1$, $p_2 < q_1$ and $p_2 < q_2$.
		\item[(iv)] 	We also prove exact results for the maximum number of $2$-chains in a family that has no $5$-path and asymptotic estimates for the number of $2$-chains in a family with no $6$-path. 
	\end{itemize}
\end{abstract}

\section{Introduction}

We say that $P$ is a \textit{(weak) subposet} of $Q$ if there is an injection $f: P\rightarrow Q$ such that $p\leq_P p'$ implies $f(p)\leq_Q f(p')$. 
Denote $P_k$  the chain with $k$ elements.
For positive integers $s$ and $t$, the poset $K_{s,t}$ has $s$ minimal elements, $t$ maximal elements and every minimal element is less than every maximal element. We write   $\bfly$ for  $K_{2,2}$, which is also called the {\it butterfly}.

For integers $n$ and $k$, let $[n]=\{1,\ldots,n\}$ and let 
\begin{align*}
    \binom{[n]}{k} = \left\{S\subseteq [n] : |S|=k\right\} .
\end{align*}

 We will sometimes use $abc$ to denote the set $\{a,b,c\}$. 
    In the Boolean lattice of dimension $n$, a {\it full chain} $\mathcal{C}$ is a sequence of $n+1$ sets of the form $\emptyset = C_0 \subset C_1 \subset\cdots\subset C_{n-1} \subset C_n = [n]$.

The expression $\La(n,P)$ denotes the largest subposet of the $n$-dimensional Boolean lattice that does not have $P$ as a subposet.
The study of $\La(n,P)$ can be traced to Sperner~\cite{S1928}, who proved that $\La(n,P_2)=\nchn$. 
This was extended by Erd\H{o}s~\cite{E1945} in 1945, who established $\La(n,P_{k+1})\sim k\nchn$, and in fact determined $\La(n,P_{k+1})$ exactly. 
The systematic study of $\La(n,P)$ for general $P$ was initiated by  Katona and Tarj\'an~\cite{KT} in 1983, who studied the case where $P$ is a fork poset, in which the {\it $r$-fork} poset on $r+1$ elements  is the poset consisting of $a,b_1,b_2,\dots,b_r$ with $a$ being smaller than all $b_i$s and the $b_i$s forming an antichain. 

In 2005, De Bonis, Katona, and Swanepoel~\cite{DKS} proved that for all $n\geq 3$, $\La(n,\bfly) = \binom{n}{\lfloor n/2\rfloor} + \binom{n}{\lfloor n/2\rfloor+1}$.

%The \textbf{butterfly} poset is a set of 4 distinct sets $\{p_1,p_2,q_1,q_2\}$ for which $p_1\cup p_2\subseteq q_1\cap q_2$. 
%Note that this also forbids a 4-element poset for which $p_1\subset p_2\subseteq q_1\cap q_2$ or $p_1\cup p_2\subseteq q_1\subset q_2$ or $p_1\subset p_2\subset q_1\subset q_2$, see Figure~\ref{fig:butterfly}.

We emphasize that the  {\it butterfly} poset is a set of 4 distinct elements $\{p_1,p_2,q_1,q_2\}$ for which $p_1<q_1$, $p_1<q_2$, $p_2<q_1$, and $p_2<q_2$. 
Note that this also forbids a 4-element poset for which both $p_1 < p_2 < q_1$ and $p_1 < p_2 < q_2$ or both $p_1 < q_1 < q_2$ and $p_2 < q_1 < q_2$ or even $p_1 < p_2 < q_1 < q_2$, see Figure~\ref{fig:butterfly}.
\begin{figure}[ht]
    \begin{center}
        \pgfdeclarelayer{bg}
        \pgfsetlayers{bg,main}
        \tikzset{vertex/.style={circle, draw=white, thin, fill=black, minimum size = 4pt},
        edge/.style={black, ultra thick}}
        \def\shft{13pt}
        \begin{tikzpicture}
	        \begin{scope}
		        \node [vertex] (p1) at (-3/4,-3/4) {};
    		    \node [vertex] (p2) at (3/4,-3/4) {};
	    	    \node [vertex] (q1) at (-3/4,3/4) {};
		        \node [vertex] (q2) at (3/4,3/4) {};
        		\node [yshift=-\shft] at (p1)   {$p_1$};
        		\node [yshift=-\shft] at (p2) {$p_2$};
	        	\node [yshift=\shft] at (q1) {$q_1$};
	    	    \node [yshift=\shft] at (q2) {$q_2$};
        		\begin{pgfonlayer}{bg}
	    	        \draw [edge] (p1) -- (q1);
		            \draw [edge] (p1) -- (q2);
		            \draw [edge] (p2) -- (q1);
        		    \draw [edge] (p2) -- (q2);
    	    	\end{pgfonlayer}
    	    \end{scope}
	
        	\begin{scope}[xshift=100pt]
    	    	\node [vertex] (p1) at (0,-1) {};
    		    \node [vertex] (p2) at (0,0) {};
        		\node [vertex] (q1) at (-3/4,1) {};
    	    	\node [vertex] (q2) at (3/4,1) {};
        		\node [yshift=-\shft] at (p1)   {$p_1$};
    		    \node [xshift=\shft] at (p2) {$p_2$};
	    	    \node [yshift=\shft] at (q1) {$q_1$};
		        \node [yshift=\shft] at (q2) {$q_2$};
    	    	\begin{pgfonlayer}{bg}
	        	    \draw [edge] (p1) -- (p2);
    		        \draw [edge] (p2) -- (q1);
		            \draw [edge] (p2) -- (q2);
    		    \end{pgfonlayer}
	        \end{scope}
	
        	\begin{scope}[xshift=195pt]
		        \node [vertex] (p1) at (-3/4,-1) {};
        		\node [vertex] (p2) at (3/4,-1) {};
		        \node [vertex] (q1) at (0,0) {};
        		\node [vertex] (q2) at (0,1) {};
    		    \node [yshift=-\shft] at (p1)   {$p_1$};
    	    	\node [yshift=-\shft] at (p2) {$p_2$};
	        	\node [xshift=\shft] at (q1) {$q_1$};
    		    \node [yshift=\shft] at (q2) {$q_2$};
		        \begin{pgfonlayer}{bg}
        		    \draw [edge] (p1) -- (q1);
		            \draw [edge] (p2) -- (q1);
        		    \draw [edge] (q1) -- (q2);
		        \end{pgfonlayer}
        	\end{scope}
	
        	\begin{scope}[xshift=280pt]
		        \node [vertex] (p1) at (0,-3/2) {};
        		\node [vertex] (p2) at (0,-1/2) {};
		        \node [vertex] (q1) at (0,1/2) {};
        		\node [vertex] (q2) at (0,3/2) {};
    		    \node [xshift=\shft] at (p1)   {$p_1$};
    	    	\node [xshift=\shft] at (p2) {$p_2$};
	        	\node [xshift=\shft] at (q1) {$q_1$};
    		    \node [xshift=\shft] at (q2) {$q_2$};
		        \begin{pgfonlayer}{bg}
        		    \draw [edge] (p1) -- (p2);
		            \draw [edge] (p2) -- (q1);
        		    \draw [edge] (q1) -- (q2);
        		\end{pgfonlayer}
	        \end{scope}
	    \end{tikzpicture}
        \caption{Forbidden configurations of a $\bfly$-free family.}
        \label{fig:butterfly}
    \end{center}
\end{figure}

%The $\bN$ poset is a set of 4 distinct sets $\{p_1,p_2,q_1,q_2\}$ for which $p_1\cup p_2\subseteq q_1$ and $p_2\subseteq q_1\cap q_2$. 
%Note that this also forbids a $\bfly$, see Figure~\ref{fig:Nposet}.

The $\bN$ poset is a set of 4 distinct elements $\{p_1,p_2,q_1,q_2\}$ for which $p_1 < q_1$, $p_2 < q_1$ and $p_2 < q_2$. 
Note that this also forbids a $\bfly$, see Figure~\ref{fig:Nposet}.

\begin{figure}[ht]
    \begin{center}
        \pgfdeclarelayer{bg}
        \pgfsetlayers{bg,main}
        \tikzset{vertex/.style={circle, draw=white, thin, fill=black, minimum size = 4pt},
        edge/.style={black, ultra thick}}
        \def\shft{13pt}
        \begin{tikzpicture}
	        \begin{scope}
		        \node [vertex] (p1) at (-3/4,-3/4) {};
    		    \node [vertex] (p2) at (3/4,-3/4) {};
	    	    \node [vertex] (q1) at (-3/4,3/4) {};
		        \node [vertex] (q2) at (3/4,3/4) {};
        		\node [yshift=-\shft] at (p1)   {$p_1$};
        		\node [yshift=-\shft] at (p2) {$p_2$};
	        	\node [yshift=\shft] at (q1) {$q_1$};
	    	    \node [yshift=\shft] at (q2) {$q_2$};
        		\begin{pgfonlayer}{bg}
	    	        \draw [edge] (p1) -- (q1);
		            \draw [edge] (p2) -- (q1);
		            \draw [edge] (p2) -- (q2);
    	    	\end{pgfonlayer}
    	    \end{scope}
	
	        \begin{scope}[xshift=80pt]
		        \node [vertex] (p1) at (-3/4,-3/4) {};
    		    \node [vertex] (p2) at (3/4,-3/4) {};
	    	    \node [vertex] (q1) at (-3/4,3/4) {};
		        \node [vertex] (q2) at (3/4,3/4) {};
		        \node [yshift=-\shft] at (p1)   {$p_1$};
       		\node [yshift=-\shft] at (p2) {$p_2$};
	        	\node [yshift=\shft] at (q1) {$q_1$};
	    	    \node [yshift=\shft] at (q2) {$q_2$};
        		\begin{pgfonlayer}{bg}
	    	        \draw [edge] (p1) -- (q1);
		            \draw [edge] (p1) -- (q2);
		            \draw [edge] (p2) -- (q1);
		            \draw [edge] (p2) -- (q2);
    	    	\end{pgfonlayer}
    	    \end{scope}
	
        	\begin{scope}[xshift=160pt]
    	    	\node [vertex] (p1) at (0,-1) {};
    		    \node [vertex] (p2) at (0,0) {};
        		\node [vertex] (q1) at (-3/4,1) {};
    	    	\node [vertex] (q2) at (3/4,1) {};
        		\node [yshift=-\shft] at (p1)   {$p_1$};
   		    \node [xshift=\shft] at (p2) {$p_2$};
	    	    \node [yshift=\shft] at (q1) {$q_1$};
		        \node [yshift=\shft] at (q2) {$q_2$};
    	    	\begin{pgfonlayer}{bg}
	        	    \draw [edge] (p1) -- (p2);
    		        \draw [edge] (p2) -- (q1);
		            \draw [edge] (p2) -- (q2);
    		    \end{pgfonlayer}
	        \end{scope}
	
        	\begin{scope}[xshift=230pt]
		        \node [vertex] (p1) at (-3/4,-1) {};
        		\node [vertex] (p2) at (3/4,-1) {};
		        \node [vertex] (q1) at (0,0) {};
        		\node [vertex] (q2) at (0,1) {};
    		    \node [yshift=-\shft] at (p1)   {$p_1$};
    	    	\node [yshift=-\shft] at (p2) {$p_2$};
	        	\node [xshift=\shft] at (q1) {$q_1$};
   		    \node [yshift=\shft] at (q2) {$q_2$};
		        \begin{pgfonlayer}{bg}
        		    \draw [edge] (p1) -- (q1);
		            \draw [edge] (p2) -- (q1);
        		    \draw [edge] (q1) -- (q2);
		        \end{pgfonlayer}
        	\end{scope}
	
        	\begin{scope}[xshift=290pt]
		        \node [vertex] (p1) at (0,-3/2) {};
        		\node [vertex] (p2) at (0,-1/2) {};
		        \node [vertex] (q1) at (0,1/2) {};
        		\node [vertex] (q2) at (0,3/2) {};
    		    \node [xshift=\shft] at (p1)   {$p_1$};
    	    	\node [xshift=\shft] at (p2) {$p_2$};
	        	\node [xshift=\shft] at (q1) {$q_1$};
    		    \node [xshift=\shft] at (q2) {$q_2$};
		        \begin{pgfonlayer}{bg}
        		    \draw [edge] (p1) -- (p2);
		            \draw [edge] (p2) -- (q1);
        		    \draw [edge] (q1) -- (q2);
        		\end{pgfonlayer}
	        \end{scope}
	    \end{tikzpicture}
        \caption{Forbidden configurations of an $\bN$-free family.}
        \label{fig:Nposet}
    \end{center}
\end{figure}

Determining $\La(n,P)$ is known as the \textit{poset Tur\'an problem}. The basic results of Tur\'an theory in graphs are well-known~\cite{T1941,ES1946,ES1966}, whereas the asymptotic value of $\La(n,P)$ is not known for most posets $P$. Famously, even if $P=\Diamond$, the 4-element diamond poset, the asymptotic value of $\La(n,\Diamond)$ is unknown~\cite{GMT}. 

\subsection{Previous Results}

One extension of Tur\'an theory is when instead of determining the maximum number of edges in an $H$-free graph, one finds the maximum number of copies of $F$ in an $H$-free graph, see for example  Alon and Shikhelman~\cite{AS}.
 
There is an obvious analogue  of generalized Tur\'an theory to posets, introduced by Gerbner, Keszegh, and Patk\'os~\cite{GKP}:
\begin{defn}
    The maximum number of copies of $Q$ in a $P$-free subfamily of the $n$-dimensional Boolean lattice is denoted by $\La(n,P,\#Q)$. 
\end{defn}
We note that the notation is slightly different, but we add the ``$\#$'' symbol to make it clear which poset is being counted.

Denote by $P_k$  the chain on $k$ elements.
Clearly, $\La(n,P)=\La(n,P,\#P_1)$.
The primary problem in  generalized Tur\'an theory on posets is determining $\La(n,P,\#P_{\ell})$, where usually, $\ell$ is less than the height of $P$. 

The function  $\La(n,P_3,\#P_2)$ was determined by Katona~\cite{K1973} and reproved independently in~\cite{P2009}.
The function  $\La(n,P_k,\#P_\ell) $ was determined for every  pair of integers $k > \ell \geq 1$.

\begin{theorem}[Gerbner-Patk\'{o}s~\cite{GP2008}] \label{thm:GP}
	For any pair of integers $k > \ell \geq 1$,
	\begin{align*}
		\La(n,P_k,\#P_\ell) 	= 	\max_{0\leq i_1<i_2<\cdots<i_{k-1}\leq n} f(n,\ell,i_1,i_2,\dots,i_{k-1}),
	\end{align*}
	where $f(n,\ell,i_1,i_2,\dots,i_{k-1})$ denotes the number of $\ell$-chains in $\binom{[n]}{i_1}\cup \binom{[n]}{i_2}\cup \dots \cup \binom{[n]}{i_{k-1}}$.
	If $k=\ell+1$, then $f(n,\ell,i_1,i_2,\dots,i_{k-1})=\binom{n}{n-i_{k-1},i_{k-1}-i_{k-2},\dots,i_2-i_1,i_1}$ and the above maximum is attained when the integers $i_1,i_2-i_1,\ldots,i_{k-1}-i_{k-2},n-i_{k-1}$ differ by at most one. Consequently,
	\begin{align*}
		\La(n,P_3,\#P_2) 		= 	(1+o(1)) \frac{3\sqrt{3}}{2 \pi n} 3^n .
	\end{align*}	
\end{theorem}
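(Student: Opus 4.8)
The plan is to sandwich $\La(n,P_k,\#P_\ell)$ between the value of an explicit construction and an upper bound obtained by averaging over maximal chains. For the lower bound, fix levels $0\le i_1<\cdots<i_{k-1}\le n$: the family $\binom{[n]}{i_1}\cup\cdots\cup\binom{[n]}{i_{k-1}}$ is a union of $k-1$ antichains, hence contains no chain on $k$ elements and is $P_k$-free, and by definition it has $f(n,\ell,i_1,\dots,i_{k-1})$ copies of $P_\ell$. Optimizing the levels gives $\La(n,P_k,\#P_\ell)\ge\max f$.

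For the upper bound, let $\cF$ be any $P_k$-free family, and for a full chain $\cC\colon\emptyset=C_0\subset C_1\subset\cdots\subset C_n=[n]$ set $P(\cC)=\{i\colon C_i\in\cF\}$; since $\cF\cap\cC$ is a chain in $\cF$ we have $|P(\cC)|\le k-1$. The heart of the argument is the double-counting identity stating that the number of $\ell$-chains in $\cF$ equals
\begin{equation*}
\frac1{n!}\sum_{\cC}\ \sum_{\{i_1<\cdots<i_\ell\}\subseteq P(\cC)}\binom{n}{i_1,\ i_2-i_1,\ \dots,\ i_\ell-i_{\ell-1},\ n-i_\ell},
\end{equation*}
the outer sum ranging over all $n!$ full chains: a fixed $\ell$-chain $T$ with $|T_j|=t_j$ lies in exactly $n!/\binom{n}{t_1,t_2-t_1,\dots,n-t_\ell}$ full chains and contributes the term $\binom{n}{t_1,\dots,n-t_\ell}$ on each of them, so it is counted with total weight $1$. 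Now one bounds the inner sum \emph{as a whole}: for any $(k-1)$-element set $\{p_1<\cdots<p_{k-1}\}\subseteq\{0,\dots,n\}$, summing these multinomials over the $\ell$-subsets of $\{p_1,\dots,p_{k-1}\}$ counts exactly the $\ell$-chains of $\binom{[n]}{p_1}\cup\cdots\cup\binom{[n]}{p_{k-1}}$, i.e.\ equals $f(n,\ell,p_1,\dots,p_{k-1})$. Since $|P(\cC)|\le k-1$ and all terms are nonnegative, the inner sum for each $\cC$ is at most $f(n,\ell,p_1,\dots,p_{k-1})$ for any $(k-1)$-set of sizes extending $P(\cC)$, hence at most $\max f$; summing over the $n!$ chains shows $\cF$ has at most $\max f$ copies of $P_\ell$, matching the lower bound. (When $n$ is so small that $k-1$ distinct levels do not exist, $P_k$-freeness is vacuous and the statement is degenerate.)

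For the special case $k=\ell+1$: a union of $k-1=\ell$ levels contains an $\ell$-chain only by taking one set from each level, so $f(n,\ell,i_1,\dots,i_\ell)=\binom{n}{i_1,\ i_2-i_1,\ \dots,\ n-i_\ell}$ — equivalently, in the argument above $|P(\cC)|\le\ell$ leaves at most one $\ell$-subset. Maximizing this multinomial coefficient over compositions of $n$ into at most $k$ parts is the standard rearrangement (log-concavity) fact and is attained when the parts $i_1,i_2-i_1,\dots,n-i_{k-1}$ differ by at most one. For $k=3,\ell=2$ the value is $\max_{i_1<i_2}\binom{n}{i_1,\ i_2-i_1,\ n-i_2}$, the trinomial coefficient with each of its three parts within $1$ of $n/3$, which by Stirling's formula equals $(1+o(1))\tfrac{3\sqrt3}{2\pi n}3^n$.

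The only step requiring genuine care is the one flagged above: one must keep the per-full-chain contribution as the single sum $\sum_{\{i_1<\cdots<i_\ell\}\subseteq P(\cC)}\binom{n}{i_1,\dots,n-i_\ell}$ and recognize it as precisely $f$, rather than bounding each multinomial by the maximal one — the latter would lose a factor of order $\binom{k-1}{\ell}$ and give only an asymptotically-off estimate. Everything else (the interchange-of-summation identity, the combinatorial meaning of $f$, and the extremization of a multinomial coefficient) is routine.
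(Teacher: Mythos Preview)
The paper does not prove this theorem; it is quoted from Gerbner--Patk\'os~\cite{GP2008} as a known result in the ``Previous Results'' subsection, so there is no in-paper proof to compare against.

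That said, your argument is correct and is essentially the standard Lubell/LYM-type proof used in the original reference. The double-counting identity is verified exactly as you say (each $\ell$-chain $T_1\subsetneq\cdots\subsetneq T_\ell$ lies on $n!/\binom{n}{|T_1|,\dots,n-|T_\ell|}$ full chains and contributes the reciprocal weight on each, for a net count of $1$), and your key observation---that the whole inner sum over $\ell$-subsets of $P(\cC)$ is \emph{itself} an instance of $f$, so one should bound the sum rather than the individual multinomials---is precisely the point that makes the bound tight. Your remark about monotonicity of $f$ under adding levels handles the case $|P(\cC)|<k-1$ cleanly, and the treatment of the $k=\ell+1$ case and the Stirling asymptotic for $\La(n,P_3,\#P_2)$ are routine and correct.
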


The general case of $P$, where $P$ has height at least $3$, has also been resolved. 

\begin{theorem}\label{t13}[Gerbner-Methuku-Nagy-Patk\'{o}s-Vizer~\cite{GMNPV}]~\\
	\begin{enumerate}[label=(\alph*)]
		\item 	If $P$ is a poset of height at least $3$, then
				\begin{align*}
					\La(n,P,\#P_2) 	= 	\Theta\left(\La(n,P_3,\#P_2)\right) .
				\end{align*} \label{it:height3}
		\item 	If $T$ is a poset $T$ of height $2$ whose Hasse diagram is a tree, then 
				\begin{align*}
					\La(n,T,\#P_2) 	= 	\Theta\left(\nchn\right) .
				\end{align*} \label{it:tree}
		\item 	If $P$ is a poset of height $2$ that has at least three elements, then
				\begin{align*}
				    \Omega\left(\nchn\right) 	= 	\La(n,P,\#P_2) 	= 	O\left(n\cdot 2^n\right) .
				\end{align*} \label{it:height2}
	\end{enumerate} \label{thm:GMNPV}
\end{theorem}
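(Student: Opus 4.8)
The three statements split naturally into a lower--bound side and an upper--bound side. All three lower bounds are elementary. For (a), since $P$ has height at least $3$ it contains $P_3$ as a weak subposet, so every $P_3$--free family is $P$--free and $\La(n,P,\#P_2)\ge\La(n,P_3,\#P_2)$. For (b) and (c), the point is that a height--$2$ poset with at least three elements and no isolated point (the isolated--point case being the only genuinely degenerate one) has at least two non--minimal elements or at least two non--maximal elements; consequently at least one of the two families
\[
\binom{[n]}{\lfloor n/2\rfloor}\cup\{[n]\}\qquad\text{and}\qquad\{\emptyset\}\cup\binom{[n]}{\lfloor n/2\rfloor}
\]
contains no weak copy of $P$ (the first is $\cong K_{\nchn,1}$ and has no copy of any $P$ with two non--minimal elements; the second is $\cong K_{1,\nchn}$, dually), and each of them has exactly $\nchn$ $2$--chains. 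This gives $\La(n,P,\#P_2)=\Omega(\nchn)$.

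For the upper bound in (a), every poset $P$ of height $h$ with $N=|P|$ elements embeds as a weak subposet into the complete $h$--level poset $K^{(h)}_N:=K_{N,N,\dots,N}$ ($h$ antichains of size $N$, with every element of level $i$ below every element of level $j$ whenever $i<j$): send $p$ to $(\ell(p),j_p)$, where $\ell(p)$ is the length of a longest chain of $P$ ending at $p$ and $j_p$ labels $p$ inside its level. Hence $\La(n,P,\#P_2)\le\La(n,K^{(h)}_N,\#P_2)$, and it suffices to prove the lemma $\La(n,K^{(h)}_N,\#P_2)=O_{h,N}\bigl(\La(n,P_3,\#P_2)\bigr)$ for $h\ge3$; together with the lower bound this is the desired $\Theta$. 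Note a union of two far--apart levels is $K^{(h)}_N$--free and already has $\Theta(3^n/n)$ $2$--chains (by \tref{GP}), so the lemma is tight; I would prove it by supersaturation, showing that a family with $\omega(\La(n,P_3,\#P_2))$ $2$--chains has so many $3$--chains, spread over many triples of levels, that a multipartite K\H{o}v\'ari--S\'os--Tur\'an argument applied to the $3$--uniform hypergraph of $3$--chains forces a copy of $K^{(h)}_N$. This lemma is the main obstacle in (a).

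For the upper bound in (c), put $N=|P|$ and split on the extremal structure of $P$. If $P$ has a unique minimal element then, as $P$ has height $2$, that element lies below all the others and the rest form an antichain, so $P$ is an $r$--fork $\vee_r$ with $r=N-1$. In a $\vee_r$--free family $\cF$ the set of elements strictly above a fixed $A$ contains no antichain of size $r$, hence is a union of at most $r-1$ chains (Dilworth), so $A$ lies in at most $(r-1)(n+1)$ $2$--chains; therefore $\cF$ has at most $(r-1)(n+1)\La(n,\vee_r)=O_r\bigl(n\nchn\bigr)=O(n\cdot2^n)$ $2$--chains, using the Katona--Tarj\'an bound $\La(n,\vee_r)=(1+o(1))\nchn$. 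The case of a unique maximal element is dual. Otherwise $P$ has at least two minimal and at least two maximal elements; here I fall back on the (always valid) embedding $P\subseteq K_{N,N}$, so $\cF$ is $K_{N,N}$--free, and it suffices to show that any $K_{t,t}$--free family has $O_t(n\cdot2^n)$ $2$--chains. I would do this by a level/chain count: in a symmetric chain decomposition the $2$--chains contained in a single symmetric chain total $O(n\cdot2^n)$ just by convexity, while for $2$--chains between two levels $i<j$ one exploits that the containment bipartite graph between two levels of $2^{[n]}$ becomes very sparse once it is $K_{t,t}$--free (e.g.\ between consecutive levels it is automatically $K_{2,2}$--free). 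Controlling the dependence of this sparsity estimate on the gap $j-i$ is the technical heart of (c).

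For the upper bound in (b), I would induct on the number of edges of the Hasse diagram of the tree $T$. A tree has a leaf, so $T$ has either a maximal element covering exactly one minimal element or, dually, a minimal element covered by exactly one maximal element; removing it leaves a smaller height--$2$ tree (or forest) $T'$. Using the standard peeling argument for forbidden--subposet problems, one bounds the number of copies of $T'$ in a $T$--free family in a controlled way and propagates the estimate $O(\nchn)$ from $T'$ to $T$; the base cases $\vee_r,\wedge_r$ are as in (c), where a count slightly more careful than the one above in fact gives $O(\nchn)$ (with $\binom{[n]}{\lfloor n/2\rfloor}\cup\{[n]\}$ essentially extremal). The crux --- and the main obstacle for (b) --- is that the induction must lose only a bounded factor at each step rather than a factor of $n$; this is exactly where the tree structure is used, since already for the height--$2$ non--tree poset $K_{2,2}=\bfly$ one has $\La(n,\bfly,\#P_2)=\Theta(n\nchn)$, strictly larger than $\Theta(\nchn)$.
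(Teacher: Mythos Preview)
This theorem is not proved in the present paper; it is quoted from~\cite{GMNPV} as background in the introduction, and the paper gives no argument for it. There is therefore no ``paper's own proof'' to compare your proposal against.

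A few remarks on the proposal itself. Your lower bounds are essentially correct, with the caveat you yourself flag: for $P=P_2$ together with an isolated point (a height-$2$ poset on three elements), any family with at least three sets and at least one containment already contains a weak copy of $P$, so $\La(n,P,\#P_2)=1$ and the $\Omega(\nchn)$ bound in (c) fails. Some connectedness hypothesis is implicitly needed.

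Your upper-bound arguments, however, are outlines rather than proofs. In each of (a), (b), (c) you explicitly name a ``main obstacle'', a ``crux'', or a ``technical heart'' and then leave it unresolved: the supersaturation/K\H{o}v\'ari--S\'os--Tur\'an step in (a), the bounded-loss-per-leaf induction in (b), and the control of sparsity across level gaps in (c) are not bookkeeping details but precisely the substance of the respective arguments. As written, none of the three upper bounds is established.

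Incidentally, the present paper's own \tref{Kst} sharpens (c) from $O(n\cdot 2^n)$ to $O\bigl(n\cdot\nchn\bigr)$ by a direct chain-counting argument via ``critical pairs'' --- a red/blue coloring of $2^{[n]}$ according to the number of supersets in $\cF$ --- which is rather different in spirit from the level-by-level K\H{o}v\'ari--S\'os--Tur\'an approach you sketch, and avoids the gap-dependence difficulty altogether.
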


Since $\nchn=\Theta\left(2^n/\sqrt{n}\right)$, there is a gap in the asymptotic value of $\La(n,P,\#P_2)$ for height $2$ posets $P$.

\subsection{New results}

We compute $\La(n,\bfly,\#P_2)$, and determine the extremal families.  
\begin{theorem} For all $n\geq 5$,
	\begin{align*}
		\La(n,\bfly,\#P_2) 	=	\left\lceil\frac{n}{2}\right\rceil \nchn . 
	\end{align*} 
	Moreover, if $n\geq 7$, then equality is only achieved by either $\binom{[n]}{\lfloor n/2\rfloor}\cup \binom{[n]}{\lfloor n/2\rfloor+1}$ or $\binom{[n]}{\lceil n/2\rceil-1}\cup \binom{[n]}{\lceil n/2\rceil}$. \label{thm:K22}
\end{theorem}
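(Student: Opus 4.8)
The plan is to prove matching upper and lower bounds. The lower bound is immediate: both $\binom{[n]}{\lfloor n/2\rfloor}\cup\binom{[n]}{\lfloor n/2\rfloor+1}$ and $\binom{[n]}{\lceil n/2\rceil-1}\cup\binom{[n]}{\lceil n/2\rceil}$ are $\bfly$-free (they consist of only two consecutive layers, so no two elements lie strictly below two common elements), and the number of $2$-chains in the union of two consecutive middle layers $\binom{[n]}{k}\cup\binom{[n]}{k+1}$ with $k=\lfloor n/2\rfloor$ is $(k+1)\binom{n}{k+1}$ when $k+1\le\lceil n/2\rceil$, which one checks equals $\lceil n/2\rceil\nchn$ after a short binomial identity. (Here a $2$-chain $A\subsetneq B$ contributes one copy; the count is $|B|$ summed over $B$ in the top layer, since each such $B$ has exactly $|B|$ subsets of size $|B|-1$.)

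For the upper bound, let $\cF$ be a $\bfly$-free family and let $c_2(\cF)$ count its $2$-chains. The natural approach is the Lubell / full-chain averaging method used throughout this area. For a full chain $\cC$, the trace $\cF\cap\cC$ is a $\bfly$-free subset of a chain, hence a chain itself; moreover the number of comparable pairs contributed along $\cC$ is $\binom{|\cF\cap\cC|}{2}$-ish, but we want $2$-chains globally, so instead I would count, for each $2$-chain $A\subsetneq B$ in $\cF$, the number of full chains through both $A$ and $B$, which is $|A|!\,(|B|-|A|)!\,(n-|B|)!$. Summing over all $2$-chains in $\cF$ and dividing by $n!$ gives
\begin{align*}
c_2(\cF) \;=\; \frac{1}{n!}\sum_{\cC}\#\{\text{comparable pairs of }\cF\text{ on }\cC\}\,\bigl(\text{suitable multiplicities}\bigr).
\end{align*}
Since $\cF\cap\cC$ is a chain of some length $\ell_\cC$, on each full chain there are $\binom{\ell_\cC}{2}$ comparable pairs, and the weighting by $|A|!\,(|B|-|A|)!\,(n-|B|)!/n!$ must be handled carefully. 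The cleaner route: use that $\bfly$-freeness forces $\cF$ to be, up to a bounded-size defect, contained in two consecutive layers. Concretely, by the De Bonis–Katona–Swanepoel theorem $|\cF|\le\binom{n}{\lfloor n/2\rfloor}+\binom{n}{\lfloor n/2\rfloor+1}$, and a $2$-chain needs its two elements in nearby layers; so I would partition $\cF$ by levels, write $a_i=|\cF\cap\binom{[n]}{i}|$, note $\sum a_i\le\binom{n}{\lfloor n/2\rfloor}+\binom{n}{\lfloor n/2\rfloor+1}$, bound the number of $2$-chains between consecutive levels and between levels at distance $\ge 2$, and show the weighted sum is maximized by concentrating all mass on two consecutive central levels.

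The main obstacle — and where the real work lies — is controlling $2$-chains whose two elements are in non-consecutive levels, since such chains are "cheap" per pair but $\bfly$-freeness does not forbid long chains outright (it forbids two minimal below two maximal). The key structural fact to extract is a Kleitman-type / normalized-matching argument: for each pair of levels $i<j$, the number of comparable pairs between $\cF\cap\binom{[n]}{i}$ and $\cF\cap\binom{[n]}{j}$ is at most something like $\min(a_i,a_j)\cdot\binom{j}{i}$ or, better, is bounded using that the bipartite comparability graph between two levels of a $\bfly$-free family has no $K_{2,2}$, hence by Kővári–Sós–Turán has $O\!\left((a_i+a_j)^{3/2}\right)$ edges — but we need an \emph{exact} bound, so instead I would use the fact that $\bfly$-freeness means: no two sets of $\cF$ have two common supersets in $\cF$, equivalently the "up-set link" of each pair is a chain. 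This should let me bound $\sum_{i<j}(\text{comparable pairs between level }i,j)$ by a linear expression in the $a_i$'s with coefficients depending on distance, after which a convexity/shifting argument pushes everything to two adjacent middle levels and an optimization over which two levels gives exactly $\lceil n/2\rceil\nchn$. Finally, for the uniqueness statement when $n\ge 7$, I would trace through the equality case of each inequality used: equality in the layer-size bound forces $\cF$ to sit in exactly two consecutive layers (any third-layer element creates slack once $n$ is large enough), and then equality in the $2$-chain count forces those two layers to be fully occupied and as central as possible, leaving precisely the two stated families; the small-$n$ cases $5\le n\le 6$ are checked separately since the uniqueness can genuinely fail there.
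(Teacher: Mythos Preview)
Your proposal does not amount to a proof: you sketch several approaches, correctly identify the main obstacle in each, and then leave that obstacle unresolved. Two concrete problems:

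\textbf{(1)} The ``cleaner route'' claim that $\bfly$-freeness forces $\cF$ to lie, up to bounded defect, in two consecutive layers is false. For instance $\{\emptyset\}\cup\binom{[n]}{\lfloor n/2\rfloor}\cup\{[n]\}$ is $\bfly$-free and not of that form. The De Bonis--Katona--Swanepoel theorem bounds only $|\cF|$, not its layer profile, so the level-by-level optimization you describe has no input to work with.

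\textbf{(2)} The K\H{o}v\'ari--S\'os--Tur\'an bound, as you note, gives only $O((a_i+a_j)^{3/2})$ between levels and cannot yield the exact constant. Your fallback structural observation (the common strict upper bounds of any two elements of $\cF$ form a set of size at most one) is correct, but you do not show how to turn it into an inequality that sums to $\lceil n/2\rceil\nchn$.

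The paper's argument is different in kind from any of your sketches. It partitions the set of containments $\cP=\{(F,F'):F\subsetneq F',\ F,F'\in\cF\}$ into blocks $\cP_1,\dots,\cP_m$ and assigns to each block a set of full chains $h(\cP_i)$ so that the $h(\cP_i)$ are pairwise disjoint and $|h(\cP_i)|/|\cP_i|\ge\lfloor n/2\rfloor!\,(\lceil n/2\rceil-1)!$; summing gives $|\cP|\le n!/\bigl(\lfloor n/2\rfloor!\,(\lceil n/2\rceil-1)!\bigr)=\lceil n/2\rceil\nchn$. The key structural step you are missing is this: write $\cF=\cF_1\cup\cF_2\cup\cF_3$ (minimals, middles, maximals); $\cF_2$ is an antichain and each $Y\in\cF_2$ sits in a unique $3$-chain $X\subset Y\subset Z$, whose three pairs are assigned to $\bC_Y$. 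For a remaining pair $F\subsetneq F'$ with $F\in\cF_1$, $F'\in\cF_3$ and nothing between, one shows that every $H$ with $F\subseteq H\subseteq F'$ has either $F$ as its unique $\cF$-subset or $F'$ as its unique $\cF$-superset (else $H$ is the center of a $\bfly$), so one can pick $G\subset G'$ with $|G'|=|G|+1$ in this interval with $G$ of the first kind and $G'$ of the second, and assign the pair to $\bC_{G,G'}$. Disjointness of the assigned chain-sets follows from these uniqueness properties, and the ratio bound is immediate from $|G|!\,(n-|G|-1)!\ge\lfloor n/2\rfloor!\,(\lceil n/2\rceil-1)!$. The uniqueness analysis for $n\ge 7$ then comes from tracing equality through these two ratio bounds.
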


\begin{rem}
    For $n\leq 4$, the expression in Theorem~\ref{thm:K22} is false. 
    For $n=2$, it is clear that $\La(2, \bfly, \#P_2)=4>2=\binom{2}{1}$. 
    For $n=3$, the family $\{\emptyset,1,2,3,123\}$ witnesses that $\La(3, \bfly, \#P_2)\geq 7>6=2\binom{3}{2}$.
    For $n=4$, the family $\{\emptyset,12,13,14,23,24,34,1234\}$ witnesses that $\La(4, \bfly, \#P_2)\geq 13>12=2\binom{4}{2}$. 

	It is not difficult to see that $\La(3, \bfly, \#P_2)=7$ but we did not choose to go through the case analysis to compute $\La(4, \bfly, \#P_2)$ or to determine all extremal families for $\La(n, \bfly, \#P_2)$ when $n\leq 6$.
\label{rem:smalln}
\end{rem}

\begin{theorem}	\label{thm:Kst}
    Fix $s\geq 2$, $t\geq 2$ and $k\geq 1$. Then, 
	\begin{align*}\textstyle
		\La(n,K_{s,t},\#P_k) 	= O\left(n\cdot \nchn\right) 	= 	O\left(\sqrt{n}\cdot2^n\right) . 
	\end{align*} 
\end{theorem}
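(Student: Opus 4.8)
The plan is to bound the number of $P_k$-copies in a $K_{s,t}$-free family $\cF\subseteq 2^{[n]}$ by reducing first to $k$-chains with bounded $k$, then to comparable pairs, and finally proving the comparable-pairs estimate by a lattice double count; write $N_k(\cG)$ for the number of $k$-chains in a family $\cG$, and fix $s,t\ge 2$.

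\emph{Step 1 (height and size).} If $\cF$ contains a chain $A_1\subsetneq\cdots\subsetneq A_{s+t}$, then $\{A_1,\dots,A_s\}$ together with $\{A_{s+1},\dots,A_{s+t}\}$ is a copy of $K_{s,t}$; hence a $K_{s,t}$-free $\cF$ has height at most $H:=s+t-1$. In particular $N_k(\cF)=0$ for $k>H$, so we may assume $2\le k\le H$; and by Mirsky's
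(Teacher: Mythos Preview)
Your outline is sound and, if completed, would essentially parallel the paper's proof; but as written it is only a fragment. Step~1 (the height bound $H=s+t-1$ and the appeal to Mirsky/Erd\H{o}s for $|\cF|\le H\binom{n}{\lfloor n/2\rfloor}$) is fine, and the announced Steps~2 and~3 are the right plan. However, neither is carried out, and they are where the content lies.

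For Step~2 (reducing $N_k$ to $N_2$), note that bounded height alone does \emph{not} give $N_k(\cF)=O(N_2(\cF))$: intervals $(A,B)\cap\cF$ can be arbitrarily large, so a single comparable pair may sit inside many $k$-chains. What you need is the dichotomy that in a $K_{s,t}$-free family every $F\in\cF$ has at most $s-1$ members of $\cF$ strictly below it or at most $t-1$ strictly above it (otherwise those two sets of witnesses already form a $K_{s,t}$ through $F$). Using this, for each $k$-chain pick the smallest index $j$ with $F_j$ ``up-bounded''; then $F_{j-1}$ is ``down-bounded'', and the pair $(F_{j-1},F_j)$ determines the rest of the chain up to a constant depending on $s,t,k$. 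This yields $N_k(\cF)\le C_{s,t,k}\bigl(|\cF|+N_2(\cF)\bigr)$. The paper does exactly this dichotomy via its blue/red colouring of all of $2^{[n]}$, and in fact handles general $k$ directly rather than first reducing to $k=2$.

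For Step~3 (the ``lattice double count'' for $N_2$), this is the heart of the argument and is entirely absent from your write-up. The paper's mechanism is: on any maximal chain the blue sets form an initial segment and the red sets a final segment, so there is at most one ``critical'' adjacent pair $(G,G')$ with $G$ blue and $G'$ red; a Lubell-type count then bounds the number of critical pairs by $\lceil n/2\rceil\binom{n}{\lfloor n/2\rfloor}$, and each comparable pair in $\cF$ with blue bottom and red top is charged to such a critical pair. Whatever ``lattice double count'' you had in mind must supply an equivalent device; without it the proof is incomplete.
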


Because every finite  poset $P$  of  height $2$ is a subposet of $K_{s,t}$ for some $s$ and $t$, we have an upper bound for $\La(n,P,\#P_2)$ of $O\left(n\cdot \nchn\right)$, which verifies Conjecture 1.6 in~\cite{GMNPV}, and improves the upper bound in Theorem~\ref{t13} (c).

If $\cF=\binom{[n]}{\lfloor n/2\rfloor}\cup \binom{[n]}{\lfloor n/2\rfloor+1}$ does not contain some poset $P$ (in particular, if $P$ contains a $\bfly$), then $\La(n,P,\#P_2)\geq\left\lceil\frac{n}{2}\right\rceil\nchn$.  
Thus, for any poset $P$, the order of magnitude of $\La(n,P,\#P_2)$ is determined if $P$ has height at least $3$ (Theorem~\ref{thm:GP}) and if $P$ has height $2$ and contains a $\bfly$ as a subposet (Theorem~\ref{thm:Kst}).

For other posets, such as the $\ell$-{\it crown}, which is a poset with $\ell$ maximal elements and $\ell$ minimal elements whose Hasse diagram is the $2\ell$-cycle, the order of magnitude of $\La(n,P,\#P_2)$ is unknown, but it is at most $\La(n,K_{\ell,\ell},\#P_2)=O\left(\sqrt{n}\cdot 2^n\right)$. \\

Our other results relate to posets whose Hasse diagram is a tree and the order of magnitude of $\La(n,T,\#P_2)$ is established by Theorem~\ref{thm:GMNPV}\ref{it:tree}.
In our case, we determine the constant coefficient in the case where several tree posets are excluded.
The simplest tree posets are ``path-like'' posets.
\begin{defn}
    Denote $\cP_k$ to be the family of those posets $P$ on $k$ elements such that the undirected Hasse diagram of $P$ is a path.
\end{defn}
Obviously,  $P_k\in \cP_k$ for all $k$, and $\bN \in \cP_4$.
Recall that  $\vee_r$ denotes the \emph{$r$-fork} poset on $r+1$ elements; that is, the poset consisting of $a,b_1,b_2,\dots,b_r$ with $a$ being smaller than all $b_i$s and the $b_i$s forming an antichain. 
Then, $\vee_2\in\cP_3$. Observe that if $k$ is even, then there exists a unique poset in $\cP_k$ of height 2, while if $k$ is odd, then there are two such posets: one being the dual of the other, that is, we can obtain one of them by reversing all relations of the other.

Our final results count $2$-chains in families avoiding all posets of $\cP_k$ or only height-$2$ posets of $\cP_k$, which behaves very differently. 
Note that for $k=4$, there is no difference between these two problems, because the unique height-$2$ tree poset on $4$ elements is $\bN$ and a copy of any tree poset on $4$ elements is a copy of $\bN$.

\begin{theorem}
    For every $n\ge 3$, 
    \begin{align*}\textstyle
        \La(n,{\normalfont \bN},\#P_2)=\La(n,\cP_4,\#P_2)=\nchn .
    \end{align*}\label{thm:N}
\end{theorem}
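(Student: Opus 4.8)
For the lower bound I would use $\cF_0=\binom{[n]}{\lfloor n/2\rfloor}\cup\{[n]\}$. Since the sets of size $\lfloor n/2\rfloor$ form an antichain and each is contained in $[n]$, the family $\cF_0$ has exactly $\nchn$ copies of $P_2$. Its comparability graph is a star, so $\cF_0$ has no $3$-chain, and a family whose comparability graph is a star cannot contain $\bN$ (a copy of $\bN$ needs an element with two elements below it, one of which lies below a second element, which is impossible when all comparabilities share the common top $[n]$); as the three members of $\cP_4$ other than $\bN$ have height $3$, the family $\cF_0$ is in fact $\cP_4$-free. Hence $\La(n,\bN,\#P_2),\La(n,\cP_4,\#P_2)\ge\nchn$, and since $\bN\in\cP_4$ we also have $\La(n,\cP_4,\#P_2)\le\La(n,\bN,\#P_2)$. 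So it suffices to prove that an $\bN$-free $\cF\subseteq 2^{[n]}$ has at most $\nchn$ copies of $P_2$.

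\textbf{Structure of $\bN$-free families.} The first real step is a structural description of the comparability graph $G$ of an $\bN$-free $\cF$, based on two observations. First, $\cF$ contains no $4$-chain, since a $4$-chain $a<b<c<d$ contains $\bN$ as a weak subposet (send $p_2\mapsto a$, $p_1\mapsto b$, $q_1\mapsto c$, $q_2\mapsto d$). Second, if $X\in\cF$ has both a proper subset and a proper superset in $\cF$, then it has exactly one of each: two proper supersets $Y\ne Y'$ of $X$ together with a proper subset $Z$ of $X$ realize $\bN$ (relations $Z<X<Y$ and $Z<Y'$), and dually for two proper subsets. From these I would deduce that every connected component of $G$ is an isolated vertex, a triangle (a $3$-chain $Z\subsetneq W\subsetneq Y$), or a star $K_{1,m}$ whose centre $X$ is comparable to an antichain of $m$ sets all lying below $X$ or all lying above $X$: if a component contains an $X$ with a proper sub- and superset in $\cF$, the two previous observations force it to be exactly such a triangle; otherwise the component is bipartite, it contains no $P_4$ as a subgraph (a $P_4$ in a height-$2$ component is literally a copy of $\bN$), and a connected bipartite graph with no $P_4$ subgraph is a star. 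In particular, members of distinct components are incomparable in $2^{[n]}$.

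\textbf{Reduction to a packing estimate.} Writing the number of $2$-chains of $\cF$ as $3t+s$ (with $t$ the number of triangle components and $s$ the total number of leaves over all star components), the leaves of all star components form a single antichain $\cL$, so $s=|\cL|$. For a triangle $Z\subsetneq W\subsetneq Y$ the whole interval $[Z,Y]$ is incomparable to every member of $\cF\setminus\{Z,W,Y\}$ (a set $S\in[Z,Y]$ comparable to an outside set $T$ would make $T$ comparable to $Z$ or to $Y$), so the intervals of distinct triangles, and $\cL$, are pairwise incomparable. When $|Y|-|Z|\ge 3$ the interval $[Z,Y]\cong 2^{[|Y|-|Z|]}$ contains an antichain of size $3$; taking $\cL$, such a $3$-antichain inside each of these "wide" triangles, and the two middle-level sets $\{W,W'\}$ of each \emph{thin} triangle (one spanning three consecutive levels, whose interval has maximum antichain only $2$), I get an antichain of $2^{[n]}$, whence by Sperner's theorem $s+3t_{\mathrm{wide}}+2t_{\mathrm{thin}}\le\nchn$, i.e.\ $3t+s\le\nchn+t_{\mathrm{thin}}$. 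So everything reduces to absorbing $t_{\mathrm{thin}}$.

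\textbf{The obstacle: thin triangles.} I expect this last point to carry essentially all of the difficulty. The extra leverage is that a thin triangle $Z\subsetneq W\subsetneq Y$ reserves the full $2$-dimensional block $[Z,Y]=\{Z,W,W',Y\}$: one checks $W'\notin\cF$ (else $\{Z,W,W',Y\}$ is a $\bN$), and all four sets are incomparable to $\cF\setminus\{Z,W,Y\}$. Thus $\bigcup[Z_i,Y_i]$ over the thin triangles is a union of pairwise-incomparable $2$-dimensional Boolean blocks, each contributing $3$ two-chains but occupying $4$ sets, and one must show this packing forces $3t+s\le\nchn$. A double-counting argument — count pairs $(Y_j,S)$ with $S$ a non-reserved set one level below $Y_j$, using that each $Y_j$ has exactly two reserved sets just below it and each $S$ has at most $n-|S|$ supersets of size $|Y_j|$ — already gets within a $(1+O(1/n))$ factor; pushing it to the exact bound needs a genuine Kruskal–Katona / shadow estimate (the worst case being thin triangles straddling the middle layer symmetrically, where the reserved $Y_j$'s are sparse enough that the crude superset bound is very lossy), together with a separate, trivial treatment of the degenerate situations in which some triangle touches level $0$ or level $n$ and hence $\cF$ is connected and tiny. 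The structure lemma and the reduction to this packing problem are soft; the packing estimate for the thin triangles is the hard part.
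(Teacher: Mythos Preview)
Your structural analysis is correct and essentially identical to the paper's: every component of the comparability graph of an $\bN$-free family is an isolated vertex, a triangle (a $3$-chain $Z\subsetneq W\subsetneq Y$), or a star whose leaves form an antichain. Where you diverge from the paper, and where the real gap lies, is the counting.

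You try to bound the number of containments by assembling one large antichain and invoking Sperner, which immediately runs into the ``thin triangle'' obstruction you describe: a $3$-chain spanning three consecutive levels only yields an antichain of size $2$ in its interval, leaving a deficit of $t_{\mathrm{thin}}$ that you hope to recover via a Kruskal--Katona/shadow argument you do not actually carry out. This is the genuine gap. The paper avoids this entirely by switching from Sperner (counting antichain elements) to the LYM/Lubell method (counting full chains). It shows that a component with $r$ containments has a convex hull meeting at least $r\cdot\lfloor n/2\rfloor!\,\lceil n/2\rceil!$ full chains. For a star this is immediate from the antichain of leaves, exactly as you argue. For a triangle $Z\subsetneq W\subsetneq Y$ the convex hull is the full interval $[Z,Y]$, and the number of full chains meeting it is $n!/\binom{n-|Y|+|Z|}{|Z|}$; since $|Y|-|Z|\ge 2$ this is at least $n(n-1)\bigl(\lfloor n/2\rfloor-1\bigr)!\,\bigl(\lceil n/2\rceil-1\bigr)!$, and the elementary inequality $n(n-1)\ge 3\lfloor n/2\rfloor\lceil n/2\rceil$ (valid for all $n\ge 3$, with equality at $n=3,4$) gives exactly the factor $3$ you need. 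Thin and wide triangles are handled in one line, with no shadow estimates and no degenerate cases. In short: replace your antichain-packing step by a chain-counting (LYM) step, and the ``hard part'' disappears.
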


Denote  by $W$ the poset on $5$ elements $a,b,c,d,e$ with $b < a$, $b < c$, $d < c$, and $d < e$ being all of its relations.
Let $M$ be its dual poset obtained from $W$ by reversing all of its relations,
see Figure~\ref{fig:WMposet}.

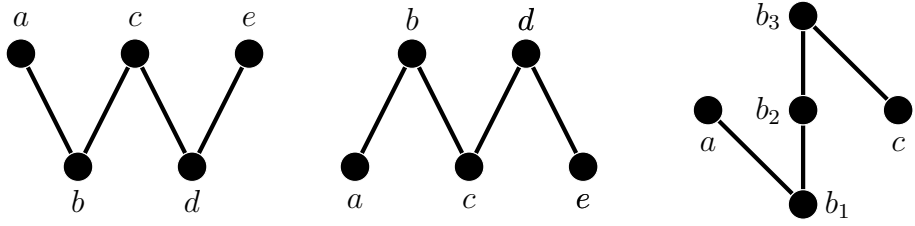
\begin{figure}[ht]
    \begin{center}
        \pgfdeclarelayer{bg}
        \pgfsetlayers{bg,main}
        \tikzset{vertex/.style={circle, draw=white, thin, fill=black, minimum size = 4pt},
        edge/.style={black, ultra thick}}
        \def\shft{13pt}
        \begin{tikzpicture}
	        \begin{scope}
		        \node [vertex] (a) at (-3/2,3/4) {};
    		    \node [vertex] (b) at (-3/4,-3/4) {};
	    	    \node [vertex] (c) at (0,3/4) {};
		        \node [vertex] (d) at (3/4,-3/4) {};
		        \node [vertex] (e) at (3/2,3/4) {};
        		\node [yshift=\shft] at (a)   {$a$};
        		\node [yshift=-\shft] at (b) {$b$};
	        	\node [yshift=\shft] at (c) {$c$};
	    	    \node [yshift=-\shft] at (d) {$d$};
	    	    \node [yshift=\shft] at (e) {$e$};
        		\begin{pgfonlayer}{bg}
	    	        \draw [edge] (a) -- (b) -- (c) -- (d) -- (e);
    	    	\end{pgfonlayer}
    	    \end{scope}
	
	        \begin{scope}[xshift=125pt]
	           \node [vertex] (a) at (-3/2,-3/4) {};
    		    \node [vertex] (b) at (-3/4,3/4) {};
	    	    \node [vertex] (c) at (0,-3/4) {};
		        \node [vertex] (d) at (3/4,3/4) {};
		        \node [vertex] (e) at (3/2,-3/4) {};
        		\node [yshift=-\shft] at (a)   {$a$};
        		\node [yshift=\shft] at (b) {$b$};
	        	\node [yshift=-\shft] at (c) {$c$};
	    	    \node [yshift=\shft] at (d) {$d$};
	    	    \node [yshift=-\shft] at (e) {$e$};
        		\begin{pgfonlayer}{bg}
	    	        \draw [edge] (a) -- (b) -- (c) -- (d) -- (e);
    	    	\end{pgfonlayer}
    	    \end{scope}
    	    
    	   \begin{scope}[xshift=250pt]
	           \node [vertex] (a) at (-5/4,0) {};
    		    \node [vertex] (b1) at (0,-5/4) {};
	    	    \node [vertex] (b2) at (0,0) {};
		        \node [vertex] (b3) at (0,5/4) {};
		        \node [vertex] (c) at (5/4,0) {};
        		\node [yshift=-\shft] at (a)   {$a$};
        		\node [xshift=\shft] at (b1) {$b_1$};
	        	\node [xshift=-\shft] at (b2) {$b_2$};
	        	\node [xshift=-\shft] at (b3) {$b_3$};
	        	\node [yshift=-\shft] at (c) {$c$};
	    	    \node [yshift=\shft] at (d) {$d$};
	    	    \node [yshift=-\shft] at (e) {$e$};
        		\begin{pgfonlayer}{bg}
	    	        \draw [edge] (a) -- (b1) -- (b2) -- (b3) -- (c);
    	    	\end{pgfonlayer}
    	    \end{scope}
	    \end{tikzpicture}
        \caption{The $W$ poset, the $M$ poset, and the $S$ poset.}
        \label{fig:WMposet}
    \end{center}
\end{figure}

Observe that $W$ and $M$ are the unique posets of $\cP_5$ of height $2$, but this time a copy of the poset $S\in \cP_5$ is not necessarily a copy of either $W$ or $M$, where the relations of $S$ on $a,b_1,b_2,b_3,c$ are $b_1 < a$, $b_1 < b_2 < b_3$, and $c < b_3$, see Figure~\ref{fig:WMposet}. 
It turns out that forbidding all of $\cP_5$ and forbidding only $W$ and $M$ result in completely different extremal values and families as seen in Theorems~\ref{thm:P5} and~\ref{thm:P6} below.

\begin{theorem}
    If $n$ is sufficiently large, then
    \begin{align*} \textstyle
        \La(n,\cP_5,\#P_2)
        =5\cdot\binom{n-2}{\floor{n/2}-1}
        =\left(\frac{5}{4}+o(1)\right)\cdot
        \nchn .
    \end{align*} \label{thm:P5}
\end{theorem}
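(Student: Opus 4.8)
\emph{Overview and lower bound.} The plan is to prove the lower bound by an explicit ``disjoint diamonds'' construction and the upper bound by a structural analysis of $\cP_5$-free families. For the construction, put $m=\lfloor(n-2)/2\rfloor=\lfloor n/2\rfloor-1$, fix two elements $n-1,n$, and set
\[
  \cF=\bigcup_{A\in\binom{[n-2]}{m}}\bigl\{A,\ A\cup\{n-1\},\ A\cup\{n\},\ A\cup\{n-1,n\}\bigr\}.
\]
Each block $\cB_A:=\{A,A\cup\{n-1\},A\cup\{n\},A\cup\{n-1,n\}\}$ is a diamond and so contributes $\binom42-1=5$ comparable pairs, while two sets lying in distinct blocks $\cB_A,\cB_{A'}$ are incomparable because their traces on $[n-2]$ are the distinct equal-size sets $A,A'$; hence $\cF$ has exactly $5\binom{n-2}{m}$ two-chains. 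The family $\cF$ is $\cP_5$-free: every $P\in\cP_5$ has five elements and a connected Hasse diagram, hence a connected comparability graph on five vertices, so a weak copy of $P$ in $\cF$ would have all five of its vertices inside a single block (elements of distinct blocks being incomparable), which is impossible since $|\cB_A|=4$. As $\binom{n-2}{\lfloor n/2\rfloor-1}=(\tfrac14+o(1))\nchn$, this also yields the asymptotic form.

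\emph{Upper bound, setup.} Let $\cF$ be $\cP_5$-free; the goal is to show it has at most $5\binom{n-2}{\lfloor n/2\rfloor-1}$ two-chains. Since $\cF$ is $P_5$-free, every chain in $\cF$ has at most four sets, so I would split $\cF=L_1\sqcup L_2\sqcup L_3\sqcup L_4$ into its height levels ($L_1$ the minimal sets of $\cF$, and $L_{i+1}$ the minimal sets of $\cF\setminus(L_1\cup\dots\cup L_i)$); each $L_i$ is an antichain, and every two-chain joins some $L_i$ to some $L_j$ with $i<j$. The first key step is a degeneracy statement for consecutive levels: the bipartite comparability graph $G_i$ between $L_i$ and $L_{i+1}$ contains no path on five vertices, since reading off the comparabilities along such a path produces a weak copy of $W$ or of $M$, both forbidden. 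One then checks that a bipartite graph with no path on five vertices is a disjoint union of stars, double stars, and $4$-cycles; in particular $e(G_i)\le|L_i|+|L_{i+1}|$, and the $4$-cycle components — exactly what a diamond contributes between adjacent levels — are the only ones meeting this bound with equality.

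\emph{The main step and the obstacle.} The heart of the argument is then to (a) bound the two-chains that skip a level ($L_1$--$L_3$, $L_2$--$L_4$, $L_1$--$L_4$), and (b) promote the local star/double-star/$C_4$ picture for the $G_i$ to a \emph{global} decomposition of the two-chains of $\cF$ into pairwise ``independent'' gadgets — diamonds, forks and co-forks, short chains, and $C_4$-type pieces — which is where the remaining forbidden posets $S$ and its dual come in (together with $P_5,W,M$ reapplied across non-consecutive levels): a skipping two-chain sitting next to a star or $C_4$ component typically completes a copy of $S$, of its dual, or of $P_5$. With such a decomposition in hand I would finish by a weighting argument: attach to each gadget a Sperner-type ``capacity'' (morally the normalised number of full chains of $2^{[n]}$ it meets), observe that distinct gadgets consume disjoint capacity because their extreme sets form antichains, and verify by a short convexity computation that the diamond is the optimal gadget, giving at most $5$ two-chains per $\binom{n-2}{\lfloor n/2\rfloor-1}$ units of capacity; the equality discussion then forces $\cF$ to be one of the two claimed families. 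I expect steps (a)--(b), and in particular pinning down the \emph{exact} constant $5$ and the exact binomial $\binom{n-2}{\lfloor n/2\rfloor-1}$ rather than a cruder $(\tfrac54+o(1))\nchn$, to be the main obstacle: it requires a careful case analysis of how the components of the $G_i$ can be glued along shared level vertices and of how much ``cheap'' (widely spread) comparability can occur without creating a forbidden path-poset.
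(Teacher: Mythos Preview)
Your lower bound construction is correct and is exactly the family the paper uses.

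The upper bound, however, is genuinely incomplete, and you are candid about this: the ``global gadget decomposition'' and the ``capacity'' argument in steps (a)--(b) are only sketched. The difficulty is real, and it stems largely from the level decomposition itself. A single diamond $A\subset B,C\subset D$ has $A\in L_1$, $B,C\in L_2$, $D\in L_3$; its five comparable pairs land in $G_1$ (the star $A$--$B$, $A$--$C$), in $G_2$ (the star $B$--$D$, $C$--$D$), and as the skipping pair $A$--$D$. So the very structure you want to isolate as the optimal gadget is dismembered by the level partition and must be reassembled. Your proposed reassembly (``how the components of the $G_i$ can be glued along shared level vertices'') is tantamount to recovering the connected components of the full comparability graph, and the ``capacity'' you describe (``normalised number of full chains it meets'') is exactly the chain-counting weight --- neither is actually carried out. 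The claim about uniqueness of the extremal family is also not part of the theorem and is not proved in the paper.

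The paper's route is more direct and avoids the reassembly problem: it works from the start with the connected components of the \emph{full} comparability graph of $\cF$. Because a $5$-vertex path in this graph yields a poset in $\cP_5$, the components are classified explicitly (stars/double-stars; a triangle $A\subset B\subset C$ with pendants only at $A$ or only at $C$; a single $4$-cycle component with $4$, $5$, or $6$ comparabilities). For a component with $c$ comparabilities one shows that its convex hull is hit by at least $\dfrac{c\cdot n!}{5\binom{n-2}{\lfloor n/2\rfloor-1}}$ full chains, using two simple tools: the exact count of full chains through an interval $[A,B]$, and a lemma (Katona--Nagy) lower-bounding the number of full chains meeting a $t$-element family by $\bigl(t-\tfrac{t(t-1)}{n}\bigr)\lfloor n/2\rfloor!\lceil n/2\rceil!$. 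Since convex hulls of distinct components are incomparable, these chain-sets are disjoint, and summing gives $c_{\text{total}}\le 5\binom{n-2}{\lfloor n/2\rfloor-1}$. The tight case is precisely the diamond component $A\subset B,C\subset D$ with $|D|-|A|=2$, which has five comparabilities and convex hull meeting exactly $n!/\binom{n-2}{\lfloor n/2\rfloor-1}$ full chains. If you want to salvage your approach, the cleanest fix is to drop the level partition and run your ``capacity'' idea directly on connected components --- that \emph{is} the paper's proof.
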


\begin{theorem}
    If $n\geq 2$ then 
    \begin{align*} \textstyle
        2\nchn+1
        \le \La(n,\{W,M\},\#P_2)
        \le \La(n,\cP_6,\#P_2)
        \le (2+o(1))\nchn .
    \end{align*} \label{thm:P6}
\end{theorem}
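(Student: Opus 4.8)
The plan is to prove the three inequalities separately; the middle one is immediate from $\{W,M\}\subseteq\cP_6$ (any family containing a copy of $W$ or $M$ contains a copy of some poset in $\cP_6$—indeed $W,M$ themselves embed into length-$6$ path posets by adding one comparable element on top or bottom), so forbidding all of $\cP_6$ is at least as restrictive as forbidding $\{W,M\}$, hence $\La(n,\{W,M\},\#P_2)\le\La(n,\cP_6,\#P_2)$. For the lower bound $2\nchn+1\le\La(n,\{W,M\},\#P_2)$, I would exhibit an explicit $\{W,M\}$-free family with that many $2$-chains. The natural candidate is three consecutive middle levels, $\cF=\binom{[n]}{m-1}\cup\binom{[n]}{m}\cup\binom{[n]}{m+1}$ with $m=\lfloor n/2\rfloor$, intersected or trimmed so as to kill all copies of $W$ and $M$: note that three full consecutive levels already contain both $W$ and $M$, so one must delete enough sets. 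A cleaner candidate is to take two full adjacent levels $\binom{[n]}{m}\cup\binom{[n]}{m+1}$ (which is even $\bfly$-free, hence certainly $\{W,M\}$-free, and has $(m+1)\binom{n}{m+1}+\dots$—too many, in fact $\Theta(\sqrt n\,2^n/\ldots)$, wait: it has $\lceil n/2\rceil\nchn$ many $2$-chains by Theorem~\ref{thm:K22}), which already beats $2\nchn$; but to get the sharp constant $2$ with the "$+1$" one should instead look for a family of $2$-chain-count exactly $2\nchn+1$, e.g. a level $\binom{[n]}{m}$ (an antichain, $0$ chains) is useless, so take $\binom{[n]}{m}$ together with a small number of sets below and above arranged in a crown-like pattern avoiding the zig-zag structure of $W$ and $M$. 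I would determine the precise $\{W,M\}$-free configuration by analyzing which "zig-zags" of length $5$ in the Hasse diagram are forbidden and checking a two-level-plus-a-few-extra-sets construction gives $2\nchn+1$.

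For the upper bound $\La(n,\cP_6,\#P_2)\le(2+o(1))\nchn$, the key structural fact is that a $\cP_6$-free family $\cF$ cannot contain a long alternating walk in its Hasse/comparability structure. I would use a full-chain (or random-chain) double-counting argument in the style of the proof of Theorem~\ref{thm:GP} and of Theorem~\ref{thm:GMNPV}\ref{it:tree}: count pairs $(\cC, (A\subsetneq B))$ where $\cC$ is a full chain in $2^{[n]}$ and $A\subsetneq B$ is a $2$-chain in $\cF$ with $A,B\in\cC$. The number of $2$-chains in $\cF$ is $\sum_{A\subsetneq B,\,A,B\in\cF}1$, and each such pair lies on exactly $|A|!\,(|B|-|A|)!\,(n-|B|)!$ full chains, so as usual the count of $2$-chains equals $n!^{-1}\sum_\cC (\text{weighted number of comparable pairs of }\cF\text{ on }\cC)$. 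Hence it suffices to show that on almost every full chain $\cC$, the subfamily $\cF\cap\cC$—which is a chain, and being $\cP_6$-free imposes strong restrictions—has few comparable pairs after weighting; the cleanest route is to show $\cF$ restricted to a typical chain (together with the "local" structure around it) cannot be too large, so that the number of comparable pairs it contributes is $O(1)$ relative to the width. Concretely I expect to prove: if $\cF$ is $\cP_6$-free then $\cF=\cF_0\cup\cF_1$ where $\cF_0$ is "almost an antichain" (has no $3$-element chain in a suitable robust sense) and a small exceptional part, and then bound $2$-chains in $\cF_0$ by $(1+o(1))\nchn$ via the Katona–Tarján/Gerbner–Patkós machinery (Theorem~\ref{thm:GP} with $k=3,\ell=2$ gives the value $\nchn$ for $P_3$-free families, and a $\cP_6$-free family is "close" to $P_3$-free), picking up the factor $2$ from the exceptional/boundary contributions.

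The main obstacle will be the upper bound, specifically extracting from $\cP_6$-freeness a clean statement of the form "$\cF$ is, up to a negligible set and a bounded-depth fattening, contained in at most two consecutive levels" or "contains no robust $3$-chain." Forbidding the family $\cP_6$ forbids many different Hasse-path shapes simultaneously (all height profiles of a length-$6$ path), and one must argue that avoiding all of them forces the comparability graph of $\cF$ to be locally shallow; the subtlety, flagged by the authors' remark that forbidding $\cP_5$ versus $\{W,M\}$ gives genuinely different answers, is that a single poset like $W$ is much weaker than the whole family $\cP_5$, and similarly here the gap between the lower bound ($2\nchn+1$, from a $\{W,M\}$-free family) and a hypothetical matching upper bound for $\{W,M\}$ alone is left open—so I would not attempt the sharp constant for $\{W,M\}$, only the $(2+o(1))$ upper bound for the stronger hypothesis $\cP_6$-free, where the richness of the forbidden family is exactly what makes the chain-intersection argument go through. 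A secondary technical point is handling the $o(1)$ error terms uniformly; I would absorb these into the standard estimate $\binom{n-2}{\lfloor n/2\rfloor-1}=(1/4+o(1))\nchn$ and its analogues, so that "a bounded number of consecutive near-middle levels" contributes $(c+o(1))\nchn$ with the right constant $c$.
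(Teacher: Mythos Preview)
Your proposal has genuine gaps in all three parts.

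\textbf{Middle inequality.} You argue in the wrong direction. You claim that ``any family containing a copy of $W$ or $M$ contains a copy of some poset in $\cP_6$'', and conclude that forbidding $\cP_6$ is at least as restrictive as forbidding $\{W,M\}$. But that would yield $\La(n,\cP_6,\#P_2)\le\La(n,\{W,M\},\#P_2)$, the reverse of what is stated. (Your claim is also false: a family consisting of exactly five sets forming $W$ contains no six-element poset at all.) What must actually be shown is that every $\{W,M\}$-free family is $\cP_6$-free, i.e.\ that any configuration of six sets $A_1,\dots,A_6$ with $A_i$ comparable to $A_{i+1}$ contains a copy of $W$ or $M$ among some five of them. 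The paper does this by a short case analysis on the longest monotone run in the sequence; this is the real content of the inequality, and it is not immediate.

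\textbf{Lower bound.} Two adjacent full levels are not $\{W,M\}$-free: for instance $\{1,\dots,k\}$, $\{1,\dots,k,k{+}1\}$, $\{1,\dots,k,k{+}2\}$, $\{2,\dots,k,k{+}2\}$, $\{2,\dots,k,k{+}2,k{+}3\}$ form a $W$. (Also, ``$\bfly$-free implies $\{W,M\}$-free'' is false; neither $W$ nor $M$ contains a $\bfly$.) The paper's construction is the simple one mentioned after the theorem statement: $\{\emptyset\}\cup\binom{[n]}{\lfloor n/2\rfloor}\cup\{[n]\}$, which has exactly $2\nchn+1$ comparable pairs and is $\{W,M\}$-free because every comparable pair uses $\emptyset$ or $[n]$.

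\textbf{Upper bound.} Your chain-averaging/decomposition sketch is vague and misses the two clean ingredients the paper uses. Since $\cF$ is $\cP_6$-free, its comparability graph has no $6$-vertex path, so by the Erd\H{o}s--Gallai theorem it has at most $2|\cF|$ edges; and since $\cF$ avoids the height-$2$ alternating path on six vertices (a tree poset), Bukh's theorem gives $|\cF|\le(1+o(1))\nchn$. Combining these yields the bound in two lines. Your proposed route of showing $\cF$ is ``close to $P_3$-free'' would not give the constant $2$ without essentially reproving these facts.
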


In Theorem~\ref{thm:P5}, the extremal configuration is the set of all sets $F$ for which $|F\cap [n-2]|$ has size exactly $\lfloor n/2\rfloor-1$. 
However, in Theorem~\ref{thm:P6}, the  configuration that gives the lower bound is simply the largest antichain, together with $\{\emptyset,[n]\}$.  

%We have learned that Theorem \ref{nN} and Theorem \ref{butterfly} have recently been obtained independently by Balogh and Martin \cite{BM}.

The paper is organized as follows: 
In Section~\ref{sec:thm:Kst}, we prove Theorems~\ref{thm:K22} and~\ref{thm:Kst}. 
In Section~\ref{sec:P5P6}, we prove Theorems~\ref{thm:N},~\ref{thm:P5} and~\ref{thm:P6}. 
%In Section~\ref{sec:K22:P3}, we discuss problems related to counting copies of $P_3$ in families that forbid height $2$ posets. 
In Section~\ref{sec:open}, we present some open problems.

\section{Proofs of Theorems~\ref{thm:K22} and~\ref{thm:Kst}}
\label{sec:thm:Kst}

We start with the proof of Theorem~\ref{thm:Kst} because that proof uses the basic ideas that we use in both of the proofs. 
For Theorem~\ref{thm:K22}, some more careful details are necessary to ensure the precise result that is given.

\begin{proof}[Proof of Theorem \ref{thm:Kst}]
%Observe first that, since any poset $P$ of height 2 is a subposet of $K_{|P|,|P|}$, it is sufficient to prove the statement of the theorem for all $K_{s,s}$ with $s\ge 2$. 
Let $\cF\subset 2^{[n]}$ be a $K_{s,t}$-free family. We color the sets of $2^{[n]}$ in the following way: 
If a set $G$ is strictly contained by (that is, ``below'') at least $t$ elements of $\cF$ then $G$ is blue, otherwise it is red. 
Note that the blue sets form a downset and the red sets form an upset. 
Furthermore, each blue set contains at most $s-1$ sets from $\cF$ (including, possibly, itself), otherwise a $K_{s,t}$ would be formed.
Define a \textit{critical pair} to be a pair of sets $(G,G')$ where $G\subsetneq G'$, $|G'|=|G|+1$, $G$ is blue and $G'$ is red. 

Let us count copies of $P_k$ of the form $F_1\subsetneq F_2 \subsetneq \dots \subsetneq F_k$, according to the colors of $F_1,F_k$. 
Suppose first that $F_k$ is blue. By definition, a blue set contains  at most $s-1$ other members of $\cF$,
  therefore the number of such $P_k$s is at most $\binom{s-1}{k-1}2^n$. 
Now assume that $F_1$ is red. 
By the definition of the coloring, a red set is below at most $t-1$ other members of $\cF$, hence the number of such $P_k$s is at most $\binom{t-1}{k-1}2^n$. 
This gives that the  number of monochromatic $P_k$s is at most $\left[\binom{s-1}{k-1}+\binom{t-1}{k-1}\right]2^n<2^{\max\{s+t\}+n}=O\left(\sqrt{n}\cdot \nchn\right)$. 

Finally, suppose that $F_1$ is blue and $F_k$ is red.  For every $k$-chain of the form $F_1\subsetneq F_2 \subsetneq \dots \subsetneq F_k$, there is a pair $F_1\subseteq G\subsetneq G'\subseteq F_k$ such that $(G,G')$ is a critical pair and each of $G$ and $G'$ are comparable with each set $F_i$, $i=1,\ldots,k$. Because the blue sets form a downset and the red sets form an upset, we have that   each  maximal chain $\cC$ contains at most one critical pair $(C,C')$ with  $C,C'\in\cC$. 

Now let us count the triples $(G,G',\cC)$ with $G,G'\in \cC$, the pair $(G,G')$ a critical pair, and the existence of $F_1\subsetneq F_2 \subsetneq \ldots \subsetneq F_k$ such that $F_1\subseteq G,G'\subseteq F_k$ and each $F_i$ is comparable with $G,G'$. 
Each such pair $(G,G')$ is contained in $|G|!(n-|G|-1)!$ maximal chains, and as we remarked earlier, every chain contains at most one such pair. 
So we obtain $\sum_{(G,G')}|G|!(n-|G|-1)!\le n!$. 
Since $k!(n-k-1)!$ is minimized when $k=\lceil\frac{n-1}{2}\rceil=\lfloor \frac{n}{2}\rfloor$, we obtain that the number of summands, and thus the number of pairs $(G,G')$ is at most
\begin{align*}
    \frac{n!}{\lfloor n/2\rfloor! \cdot \left(n-\lfloor n/2\rfloor-1\right)!} = \left\lceil\frac{n}{2}\right\rceil \cdot\nchn .
\end{align*} 

Finally, we claim that the number of $k$-chains in $\cF$ to which every pair $(G,G')$ belongs is at most $\sum_{i=1}^{k-1}\binom{s-1}{i}\binom{t}{k-i}$. 
Indeed, because $G$ is blue, it contains (allowing for itself) at most $s-1$ sets of $\cF$ and because $G'$ is red, it is contained (allowing for itself) in at most $t$ sets in $\cF$. 
So, $\binom{s-1}{i}\binom{t}{k-i}$ gives a bound on the number of $k$-chains such that the first $i$ sets of which are contained in $G$ and the remaining $k-i$ sets of which contain $G'$. 

Therefore, the number of $k$-chains with $F_1$ blue and $F_k$ red is at most 
$$\sum_{i=1}^{k-1}\binom{s-1}{i} \cdot \binom{t}{k-i} \cdot \left\lceil\frac{n}{2}  \right\rceil \cdot \nchn<2^{s+t-1}\cdot \left\lceil\frac{n}{2}\right\rceil \cdot \nchn.$$

So the total number of pairs in containment is at most $2^{\max\{s,t\}+n}+2^{s+t-1}\left\lceil\frac{n}{2}\right\rceil\nchn=O\left(\left\lceil\frac{n}{2}\right\rceil\nchn\right)$. 
This completes the proof of Theorem~\ref{thm:Kst}.
\end{proof}

\begin{remark}
    For the case $k=2$, it is easy to see that the proof gives 
    \begin{align*}\textstyle
        \La(n,K_{s,t},\#P_2)\leq \left(st-t+O\left(\frac{1}{\sqrt{n}}\right)\right)\cdot  \left\lceil \frac{n}{2} \right\rceil\cdot\binom{n}{\lfloor n/2\rfloor} .
    \end{align*}
    Though we will not provide the details here, the ideas in the proof of Theorem~\ref{thm:K22} can be used to  improve this to 
    \begin{align*}\textstyle
        \La(n,K_{s,t},\#P_2)\leq \left((s-1)(t-1)+O\left(\frac{1}{\sqrt{n}}\right)\right)\ \cdot \left\lceil \frac{n}{2}\right\rceil\cdot\binom{n}{\lfloor n/2\rfloor} .
    \end{align*}
\end{remark}~\\

We write $\bC_n$ to denote the set of all maximal chains in $[n]$ and for a chain $C_1,C_2,\dots,C_h$ we will write $\bC_{C_1,C_2,\dots,C_h}$ to denote all maximal chains in $\bC_n$ that contain each $C_i$, $i=1,\ldots,h$.

\begin{proof}[Proof of Theorem \ref{thm:K22}]
The set system $\binom{[n]}{\ceil{n/2}-1} \cup \binom{[n]}{\ceil{n/2}}$ is $\bfly$-free and has $\lceil\frac{n}{2}\rceil\cdot\nchn$ containments, which 
 proves the lower bound.

To prove the upper bound, consider a $\bfly$-free family $\cF\subset 2^{[n]}$. 
Our plan is as follows: 
We will partition the set of containments and associate to each part a disjoint subset of full chains. 
Then for each part in this partition, we will bound the ratio of the size of the associated subset of full chains to the size of that part.

Formally, suppose $\cP_1, \cP_2, \dots, \cP_m$ is a partition of $\cP=\{(F,F'): F\subsetneq F'\mbox{ for }F,F'\in \cF\}$ and there is a function $h: \{\cP_1,\cP_2,\dots,\cP_m\}\rightarrow 2^{\bC_n}$ such that $1\le i\neq j\le m$ implies $h(\cP_i)\cap h(\cP_j)=\emptyset$. 
Denote by $\alpha$  the minimum of $\frac{|h(\cP_i)|}{|\cP_i|}$ over $1\le i\le m$. 
By averaging, 
$$|\cP|=|\cP_1|+|\cP_2|+\ldots+|\cP_m|  \le \frac{\bC_n}{\alpha}=\frac{n!}{\alpha} $$
We  define a function $h$ with the above properties for which $\alpha$ is at least $\floor{ \frac{n}{2}}! \cdot \left(\ceil{\frac{n}{2}}-1\right)!$ because then
$$|\cP|\le \frac{n!}{\floor{\frac{n}{2}}! \cdot \left(\ceil{\frac{n}{2}}-1\right)!}=\ceil{ \frac{n}{2}}\cdot \nchn.$$

Denote $\cF_1$ (and $\cF_3$)  the set of inclusion-wise minimal (maximal) sets of $\cF$. 
Note that we may ignore any set in $\cF$ that is both minimal and maximal because such sets will not contribute to  the family of containments.
Let $\cF_2:= \cF-\cF_1-\cF_3$.  Note that $\cF_2$ is an antichain, otherwise we would have a $4$-chain, which forms a $\bfly$ poset. 

First consider all sets $Y_1,Y_2,\ldots,Y_s\in \cF_2$. 
Since $\cF$ is $\bfly$-free, for every $1\le i\le s$ there are two unique sets $X_i,Z_i\in\cF$ such that $X_i\subset Y_i\subset Z_i$, where $X_i\in\cF_1$ and $Z_i\in\cF_3$. Greedily set $\cP_i=\{(X_i,Y_i),~(Y_i,Z_i),~(X_i,Z_i)\}$, except if the pair $(X_i,Z_i)$ was already used for some $j<i$, then we set $\cP_i=\{(X_i,Y_i),~(Y_i,Z_i)\}$. 
In either case, we let $h(\cP_i)=\bC_{Y_i}$, the set of chains that contain $Y_i$. 
Clearly, $h(\cP_i)\ge\floor{\frac{n}{2}}!\cdot \ceil{\frac{n}{2}}!$ and $|\cP_i|\le 3$ and so 
 \begin{align}
    \frac{|h(\cP_i)|}{|\cP_i|}\ge \frac{1}{3} \cdot \floor{\frac{n}{2}}! \cdot \ceil{\frac{n}{2}}!\ge \floor{\frac{n}{2}}! \cdot \left(\ceil{\frac{n}{2}}-1\right)!  \qquad\mbox{ for every $1\le i\le s$.} \label{eq:3chain}
\end{align}
The last inequality comes from the condition that $n\ge 5$. \\

We still have to consider those pairs $F\subset F'$ for which $F\in\cF_1,~F'\in\cF_3$ and there is no set $Y\in\cF$ such that $F\subset Y\subset F'$. 
Let such pairs be $\left\{(F_1,F'_1),(F_2,F'_2),\dots,(F_t,F'_t)\right\}$. 
We set $\cP_{s+j}=\{(F_j,F'_j)\}$ and so $|\cP_{s+j}|=1$. 
In order to define $h(\cP_{s+j})$ for all $1\le j\le t$, we set 
\begin{align*}
    \cH_j   &=  \{H\in 2^{[n]}:F_j ~\text{is the unique subset (allowing for itself) of}\ H ~\text{in}\ \cF\}, \\
    \cH'_j  &=  \{H'\in 2^{[n]}:F'_j ~\text{is the unique superset (allowing for itself) of}\ H' ~\text{in}\ \cF\}. 
\end{align*}

Since $F_j\in\cF_1$ and $F_j'\in\cF_3$, we have $F_j\in\cH_j$ and $F_j'\in\cH'_j$. 
Furthermore, for any set $F_j\subset H \subset F_j'$, we have $H\in\cH_j\cup\cH'_j$, otherwise $H$ would have two proper subsets and two proper supersets in $\cF$, forming a $\bfly$. 
Here we used that there is no $H\in\cF$ for which  $F_j\subset H \subset F'_j$.

Observe that if $H\in\cH_j$, then every set between $F_j$ and $H$ is in $\cH_j$ and if $H'\in\cH'_j$, then every set between $H'$ and $F_j'$ is in $\cH'_j$. 
Thus, we can find two sets $G_j$ and $G_j'$ such that $F_j\subseteq G_j\subset G_j'\subseteq F_j'$, $G_j\in \cH_j$, $G'_j\in \cH_j'$ and $|G'_j|=|G_j|+1$. 
We let $h(\cP_{s+j})=\bC_{G_j,G_j'}$ for an arbitrary such pair, and thus
\begin{align}%\textstyle
    \frac{|h(\cP_{s+j})|}{|\cP_{s+j}|} = |h(\cP_{s+j})| = |G|! \cdot  (n-|G|-1)!
    \ge \ceil{\frac{n-1}{2}}! \cdot  \floor{\frac{n-1}{2}}! 
    = \floor{\frac{n}{2}}! \cdot  \left(\ceil{\frac{n}{2}}-1\right)! , \label{eq:2chain}
\end{align}
as claimed.

We still need to show that the images are pairwise disjoint. 
It is clear that $h(\cP_i)\cap h(\cP_j)=\bC_{Y_i}\cap \bC_{Y_j}=\emptyset$ where $1\le i,j\le s$, because $\cF_2$ is an antichain. 
Now consider $h(\cP_i)\cap h(\cP_{s+j})$ where $1\leq i\leq s$ and $1\leq j\leq t$. 
If a chain contains $G_j$ and $G_j'$, then it cannot contain any member of $\cF$ other than $F_j$ and $F'_j$ since $G_j \in\cH_j$ and $G_j'\in \cH'_j$. 
In particular such a chain cannot contain any $Y_i\in \cF_2$ and so $h(\cP_i)\cap h(\cP_{s+j})=\emptyset$.

Finally, suppose that a chain belongs to  $h(\cP_{s+a})\cap h(\cP_{s+b})$ via the pairs $(G_a,G_a')$ and $(G_b,G_b')$, respectively, where $1\le a,b\le t$. 
Let $S=G_a\cup G_b$.
Since both $G_a$ and $G_b$ are on the same chain, either $S=G_a$ or $S=G_b$.
Thus, $S$ is in both $\cH_a$ and $\cH_b$ and must contain only one member of $\cF$.
As a result, $F_a=F_b$. 
Analogously, using $S'=G_a'\cap G_b'$, we may conclude that $F_a'=F_b'$. 
Consequently, $a=b$ as desired. \\

It remains to establish equality in the case where $n\geq 7$. 
In order for equality to hold, both~\eqref{eq:3chain} and~\eqref{eq:2chain} must hold with equality for each $\cP_i$.
Since $n\geq 7$ implies~\eqref{eq:3chain} never holds with equality, $\cF_2$ is empty and $s=0$.
Moreover, in order for~\eqref{eq:2chain} to hold with equality, every pair $(G_j,G_j')$ must have sizes in $\left\{\lfloor\frac{n-1}{2}\rfloor, \lfloor\frac{n-1}{2}\rfloor+1\right\}$ or in $\left\{\lceil\frac{n-1}{2}\rceil, \lceil\frac{n-1}{2}\rceil+1\right\}$.

Next observe that if $\cF$ is extremal, then for every pair $(F_j,F'_j)$ we must have $F_j=G_j,F'_j=G'_j$. Indeed, if $F_j\subsetneq G_j\subsetneq G'_j\subseteq F'_j$, then there exists at least one other pair $(G^*_j,G^{**}_j)\neq (G_j,G'_j)$ with $F_j\subseteq G^*_j\subseteq G^{**}_j\subseteq F'_j$, $G^*_j\in \cH_j$, $G^{**}_j\in \cH_j'$ and $|G^*_j|+1=|G^{**}_j|$. So the mapping $h$ could be defined as $h(F_j,F'_j)=\bC_{G_j,G'_j}\cup \bC_{G^*_j,G^{**}_j}$, and then (\ref{eq:2chain}) could not hold with equality.

So $\cF_2$ is empty, for every pair $(F_j,F'_j)$ we have $F_j=G_j,F'_j=G'_j$, and the sizes are in $\left\{\lfloor\frac{n-1}{2}\rfloor, \lfloor\frac{n-1}{2}\rfloor+1\right\}$ or in $\left\{\lceil\frac{n-1}{2}\rceil, \lceil\frac{n-1}{2}\rceil+1\right\}$.

If $n$ is odd, then equality can only occur if $\cF=\binom{[n]}{(n-1)/2}\cup \binom{[n]}{(n+1)/2}$, as needed.

If $n$ is even, then each pair $(G,G')$ must contain an element from $\binom{[n]}{n/2}$ and an element from $\binom{[n]}{n/2-1}\cup\binom{[n]}{n/2+1}$. 
Because $\cF_2=\emptyset$, then no member of $\binom{[n]}{n/2}$ can have a member of $\cF$ both above and below it.
So in order for equality to hold, each $F\in\binom{[n]}{n/2}$ must be in $\cF$ and either $F$ has $n/2$ members of $\cF$ above it (an \textit{under-element}) or $n/2$ members of $\cF$ below it (an \textit{over-element}).
If there are both under elements and over elements, there must be a pair $F_1,F_2$ whose symmetric difference is 2, $F_1$ is an under-element and $F_2$ is an over-element. 
In that case, $F_1\cup F_2,F_1\cap F_2\in \cF$, which would put $F_1,F_2\in\cF_2$, a contradiction to that set being empty. 
Thus, in order for equality to hold, either $\cF=\binom{[n]}{\lfloor n/2\rfloor}\cup \binom{[n]}{\lfloor n/2\rfloor+1}$ or $\cF=\binom{[n]}{\lceil n/2\rceil-1}\cup \binom{[n]}{\lceil n/2\rceil}$. \\

This completes the proof of Theorem~\ref{thm:K22}.
%
%The number of full chains used per pair in containment is at least
%$$\min\left(\frac{1}{3}\floor{\frac{n}{2}}!\ceil{\frac{n}{2}}!,~ \floor{\frac{n}{2}}!\ceil{\frac{n}{2}-1}!\right)=\min\left(\frac{1}{3},~\frac{1}{\ceil{n/2}}\right)\floor{\frac{n}{2}}!\ceil{\frac{n}{2}}!=\floor{\frac{n}{2}}!\ceil{\frac{n}{2}-1}!.$$
%We used $n\ge 5$ to determine the minimum. Since there are $n!$ full chains, the above means that the number of pairs in containment is at most
%$$\frac{n!}{\floor{\frac{n}{2}}!\ceil{\frac{n}{2}-1}!}=\left\lceil \frac{n}{2}\right\rceil\nchn.$$
\end{proof}

%\begin{remark}
%Theorem~\ref{thm:K22} is not true for $n\in\{2,3,4\}$. The set system $\{\emptyset\}\cup{\binom{[n]}{\floor{n/2}}}\cup \{[n]\}$ is $B$-free and has $2\nchn+1$ containments which is more than $\lceil \frac{n}{2}\rceil\nchn$ if $n\le 4$.
%\end{remark}

\section{Proofs of Theorems~\ref{thm:N},~\ref{thm:P5} and~\ref{thm:P6}}
\label{sec:P5P6}

\begin{defn}
Let $\cF$ be a set system. The \textit{comparability graph} of $\cF$ is a simple graph $G$ whose vertices correspond to the sets of $\cF$. The vertices representing two sets $A$ and $B$ are connected by an edge if $A\subset B$ or $B\subset A$ holds.
\end{defn}

\begin{defn}
The convex hull of a set system $\cF\subset 2^{[n]}$ is
$$conv(\cF)=\{F\in 2^{[n]} ~ \big| ~ \exists F_1,F_2\in \cF, ~ F_1\subseteq F\subseteq F_2\}.$$
\end{defn}

\begin{defn}
Two families $\cH$ and $\cH'$ in $2^{[n]}$ are \textit{incomparable} if neither $F\subseteq F'$ nor $F'\subseteq F$ holds for every pair $F\in \cH$, $F'\in \cH'$.
\end{defn}

Note that if $\cH$ and $\cH'$ are incomparable then $conv(\cH)$ and $conv(\cH')$ are also incomparable. Therefore no full chain can intersect both $conv(\cH)$ and $conv(\cH')$.

\begin{lemma}\label{sublattice}
Let $A\subset B\subseteq [n]$. The number of full chains meeting the set $\{F~:~A\subset F\subset B\}$ is $\frac{n!}{\binom{n-|B|+|A|}{|A|}}$.
\end{lemma}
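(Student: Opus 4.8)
The plan is to count, in two different ways, the number of pairs $(\cC, F)$ where $\cC$ is a full chain and $F$ is a member of $\cC$ with $A \subsetneq F \subsetneq B$; but it is cleaner to work directly with the chains themselves. First I would observe that a full chain $\cC = (\emptyset = C_0 \subsetneq C_1 \subsetneq \cdots \subsetneq C_n = [n])$ meets the set $\{F : A \subsetneq F \subsetneq B\}$ if and only if $\cC$ passes through $A$ \emph{and} through $B$: indeed, if some $C_i$ satisfies $A \subsetneq C_i \subsetneq B$, then the sets $C_0, \dots, C_i$ form a maximal chain inside the Boolean interval $[\emptyset, C_i]$ and in particular one of them must have size $|A|$; since $A \subsetneq C_i$ and chains are totally ordered, that set of size $|A|$ is contained in $C_i \subsetneq B$, and being a subset of $C_i$ that is comparable to everything below it, it must equal $A$ (any size-$|A|$ set on the chain below $C_i$ is forced, and $A$ itself lies below $C_i$). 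Symmetrically $\cC$ passes through $B$. Conversely, if $\cC$ passes through both $A$ and $B$, then since $|A| < |B| - 1$ is not required, one must be slightly careful: if $|B| = |A|+1$ the set $\{F : A \subsetneq F \subsetneq B\}$ is empty and the claimed count $\frac{n!}{\binom{n-|B|+|A|}{|A|}} = \frac{n!}{\binom{1}{|A|}}$ is also $0$ unless $|A|=0$ or $|A|=1$; so I would first note the formula is really being applied in the regime $|B| \geq |A| + 2$ (or handle the degenerate cases separately), where passing through both $A$ and $B$ does force $\cC$ to contain an intermediate set.

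Granting that, the count reduces to: how many full chains of $2^{[n]}$ pass through both $A$ and $B$? Such a chain decomposes as a maximal chain from $\emptyset$ to $A$, followed by a maximal chain from $A$ to $B$, followed by a maximal chain from $B$ to $[n]$. These three pieces are chosen independently, so the number is
\begin{align*}
|A|! \cdot (|B|-|A|)! \cdot (n-|B|)!.
\end{align*}
The remaining step is purely arithmetic: one checks that
\begin{align*}
\frac{n!}{\binom{n-|B|+|A|}{|A|}} = \frac{n! \cdot |A|! \cdot (n-|B|)!}{(n-|B|+|A|)!} \cdot \frac{(|B|-|A|)!}{(n-|B|)! \cdot |A|!} \cdot \frac{(n-|B|+|A|)!}{(n-|B|+|A|)!},
\end{align*}
which is messy as written, so instead I would simply verify directly that $|A|!\,(|B|-|A|)!\,(n-|B|)! = \frac{n!}{\binom{n-|B|+|A|}{|A|}}$ by substituting $\binom{n-|B|+|A|}{|A|} = \frac{(n-|B|+|A|)!}{|A|!\,(n-|B|)!}$ and cancelling, leaving the identity $|A|!\,(|B|-|A|)!\,(n-|B|)! \cdot \frac{(n-|B|+|A|)!}{|A|!\,(n-|B|)!} = n!$, i.e. $(|B|-|A|)! \,(n-|B|+|A|)! = n!$ — wait, that is false in general, so I have mis-set up the bookkeeping and the correct reading must be that the interval $\{F : A \subset F \subset B\}$ together with the quotient structure is isomorphic to a Boolean lattice of dimension $|B|-|A|$ sitting inside a ground set of effective size $n - |B| + |A|$ after contracting.

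The honest route, and the one I would actually write, is this: contract $A$ to a point and $[n]\setminus B$ to a point. A full chain of $2^{[n]}$ through both $A$ and $B$ corresponds to a triple (full chain of $2^{A}$, full chain of the interval $[A,B]$, full chain of $[B,[n]]$); the interval $[A,B]$ is a Boolean lattice on $|B|-|A|$ elements. So the number of full chains through both $A$ and $B$ is $|A|!\,(|B|-|A|)!\,(n-|B|)!$, and the claim is the identity
\begin{align*}
|A|!\,(|B|-|A|)!\,(n-|B|)! = \frac{n!}{\binom{n - |B| + |A|}{|A|}},
\end{align*}
equivalently $\binom{n-|B|+|A|}{|A|} \cdot |A|!\,(|B|-|A|)!\,(n-|B|)! = n!$. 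Writing $a = |A|$, $b = |B|$, the left side is $\frac{(n-b+a)!}{a!\,(n-b)!}\cdot a!\,(b-a)!\,(n-b)! = (n-b+a)!\,(b-a)!$, which equals $n!$ only when... it does not, so the claimed formula as literally stated would need $(n-b+a)!(b-a)! = n!$, forcing me to re-examine whether "meeting the set $\{F : A \subset F \subset B\}$" in the lemma is using $\subset$ as \emph{non-strict} inclusion. I expect that it is: with $A \subseteq F \subseteq B$ the relevant chains are exactly those that lie in the sublattice, every full chain meets such an interval in a sub-chain, and a cleaner double count (chains through $A$ or through $B$, inclusion–exclusion) gives the stated denominator $\binom{n-|B|+|A|}{|A|}$. \textbf{The main obstacle}, then, is pinning down the exact meaning of "$\subset$" and "meeting" in the statement and choosing the matching counting identity; once that is fixed, the proof is a one-line factorial manipulation together with the observation that incomparable intervals force disjointness of their chain-sets (already noted before the lemma). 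I would present it as: reduce to counting full chains that pass through $A$ and through $B$ (under the intended reading, via the containment structure of chains), factor such a chain into three independent maximal chains through the appropriate intervals, and finish with the binomial identity; I would double-check the $|B| = |A|+1$ and $A = \emptyset$ / $B = [n]$ boundary cases at the end.
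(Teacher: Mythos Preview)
Your proposal contains a genuine error that persists through all the attempted reformulations. The central claim --- that a full chain meets $\{F : A \subseteq F \subseteq B\}$ if and only if it passes through both $A$ and $B$ --- is false; passing through both endpoints is a strictly stronger condition. For a concrete counterexample take $n=4$, $A=\{1\}$, $B=\{1,2,3\}$: the chain $\emptyset \subset \{2\} \subset \{1,2\} \subset \{1,2,4\} \subset \{1,2,3,4\}$ meets the interval (at $\{1,2\}$) but contains neither $A$ nor $B$. Your count $|A|!\,(|B|-|A|)!\,(n-|B|)!$ (here $1!\cdot 2!\cdot 1! = 2$) is the number of chains through both endpoints, whereas the lemma's value is $24/\binom{2}{1} = 12$. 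The arithmetic failure you ran into, $(n-|B|+|A|)!\,(|B|-|A|)! \neq n!$, is therefore not a notational artefact but a symptom of this wrong reduction; the early step asserting that the size-$|A|$ set $C_{|A|}$ on the chain must equal $A$ is simply invalid, since $A$ need not lie on the chain at all. The inclusion--exclusion on the two endpoints that you mention in passing fails for the same reason.

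The paper's one-line argument uses the correct characterization. Identifying a full chain with a permutation $\pi$ of $[n]$ via $C_i = \{\pi(1),\dots,\pi(i)\}$, the chain meets $\{F : A \subseteq F \subseteq B\}$ if and only if every element of $A$ precedes every element of $[n]\setminus B$ in $\pi$. (If all of $A$ comes first, let $i$ be the position of the last element of $A$; then $A \subseteq C_i \subseteq B$. Conversely, if $A \subseteq C_i \subseteq B$ for some $i$, then the first $i$ positions already contain all of $A$ and none of $[n]\setminus B$.) Looking only at the relative order of the $|A|+(n-|B|)$ elements of $A\cup([n]\setminus B)$, this event has probability $\binom{n-|B|+|A|}{|A|}^{-1}$, and multiplying by $n!$ gives the stated count. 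You are right that the paper is using $\subset$ non-strictly here (consistently with its definition of $conv(\cF)$), but resolving that ambiguity alone does not rescue your approach.
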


\begin{proof}
A full chain meets the set $\{F~:~A\subset F\subset B\}$ if and only if all elements of $A$ appear before the elements of $[n]\setminus B$. This is true for a random chain with probability $\binom{n-|B|+|A|}{|A|}^{-1}$.
\end{proof}

\begin{proof}[Proof of Theorem~\ref{thm:N}]
To show $\La(n,N,\#P_2)\ge\nchn$, consider the family $\{\emptyset\}\cup{\binom{[n]}{\floor{n/2}}}$. It has $\nchn$ containments and avoids the subposet $\bN$.

To prove the upper bound, consider an $\bN$-free family $\cF\subset 2^{[n]}$, and let $G$ be its comparability graph. We will classify the possible  components of $G$.

If $\cF$ contains the poset $P_3$ (three sets $A\subset B\subset C$), then there can be no fourth set comparable to any of these three, otherwise an $\bN$ would be formed. A  component corresponding to a  $P_3$ is a triangle.

If a  component contains no $P_3$, then all its paths must be alternating. 
Since we cannot have $\bN$ (the $3$-edge alternating path), there is no path with at least three edges. 
Therefore, a component consists of either a minimal element and some of its supersets forming an antichain or a maximal element and some its subsets forming an antichain. 
In either case, the corresponding component in $G$ is a star.

Note that  a full chain can meet the convex hull of at most one of the components. We will show that if a component has $r$ containments (edges in $G$), then at least $r\cdot \floor{\frac{n}{2}}!\cdot \ceil{\frac{n}{2}}!$ full chains meet its convex hull.  Since there are $n!$ full chains, this means that the number of containments is at most $n!/\left(\floor{\frac{n}{2}}!\cdot \ceil{\frac{n}{2}}!\right)=\nchn$.

If a $P_3$ component is formed by the sets $A\subset B\subset C$ then its convex hull is the set $\{F~:~A\subset F\subset C\}$. By Lemma \ref{sublattice}, it meets  $\frac{n!}{\binom{n-|C|+|A|}{|A|}}$ full chains. Since $|C|-|A|\ge 2$, this is at least $\frac{n!}{\binom{n-2}{\floor{n/2}-1}}=n(n-1)\cdot\left(\left\lfloor\frac{n}{2}\right\rfloor-1\right)!\cdot\left(\left\lceil\frac{n}{2}\right\rceil-1\right)!$, which is at least $3\cdot \floor{\frac{n}{2}}!\cdot \ceil{\frac{n}{2}}!$ when $n\ge 3$.

A $P_3$-free component of size $r+1$ has $r$ containments. It also has $r$ sets forming an antichain. Each of them meets at least $\floor{\frac{n}{2}}!\cdot \ceil{\frac{n}{2}}!$ full chains, and these are different  due to the antichain property.
%Now consider a $P_3$ component, it has 3 containments. The size of its convex hull is at least 4. By Lemma \ref{chaincount} the convex hull meets at least $(4-o(1))\floor{\frac{n}{2}}!\ceil{\frac{n}{2}}!$ full chains. This is more than $3\floor{\frac{n}{2}}!\ceil{\frac{n}{2}}!$ for large enough $n$.
%\nd{a large enough eltuntetesehez meg kuszkodni kell}
\end{proof}

The following simple lemma will be used to prove Theorem~\ref{thm:P5}. 
See also \cite{DG2015} which has the same main result about incomparable copies of a poset, proved independently.

\begin{lemma}[Katona, Nagy \cite{KN2015}]\label{chaincount}
Let $\cH$ be a family of $t$ subsets of $[n]$. Then the number of full chains meeting at least one element of $\cH$ is at least
$$\left( t-\frac{t(t-1)}{n} \right) \cdot\floor{\frac{n}{2}}! \cdot \ceil{\frac{n}{2}}!.$$
\end{lemma}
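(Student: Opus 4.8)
The plan is to apply truncated inclusion--exclusion (the Bonferroni bound) to a uniformly random full chain $\mathcal C$. Write $\cH=\{H_1,\dots,H_t\}$ and let $A_i$ be the event that $\mathcal C$ passes through $H_i$; then $\Pr[A_i]=|H_i|!\,(n-|H_i|)!/n!\ge \floor{n/2}!\,\ceil{n/2}!/n!$ for every $i$, since $k!\,(n-k)!$ is minimized at $k=\floor{n/2}$. Multiplying
$$\Pr\left[\bigcup_{i=1}^{t}A_i\right]\ \ge\ \sum_{i=1}^{t}\Pr[A_i]\ -\sum_{1\le i<j\le t}\Pr[A_i\cap A_j]$$
through by $n!$ converts probabilities into numbers of full chains, so it suffices to prove $\sum_{i<j}\Pr[A_i\cap A_j]\le\frac{t-1}{n}\sum_{i}\Pr[A_i]$: combined with the bound on each $\Pr[A_i]$ this yields exactly $\big(t-\frac{t(t-1)}{n}\big)\cdot\floor{n/2}!\cdot\ceil{n/2}!$ as the guaranteed number of chains meeting $\cH$.

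First I would dispose of two degenerate cases. If $t\ge n+1$ then $t-\frac{t(t-1)}{n}\le 0$ and the statement is vacuous, so assume $t\le n$. If $\emptyset\in\cH$ or $[n]\in\cH$, then every full chain meets $\cH$, so the count equals $n!$; and since $t-\frac{t(t-1)}{n}=\frac{t(n+1-t)}{n}\le\frac{(n+1)^2}{4n}\le n\le\nchn$, we get $\big(t-\frac{t(t-1)}{n}\big)\floor{n/2}!\ceil{n/2}!\le \nchn\cdot\floor{n/2}!\ceil{n/2}!=n!$, as needed. Hence from now on $1\le|H|\le n-1$ for all $H\in\cH$.

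The heart of the argument is the pairwise estimate $\Pr[A_i\cap A_j]\le\frac1n\big(\Pr[A_i]+\Pr[A_j]\big)$, valid for every pair. If $H_i$ and $H_j$ are incomparable it is trivial, since $\Pr[A_i\cap A_j]=0$. If $H_i\subsetneq H_j$, set $a=|H_i|<|H_j|=b$; counting orderings of $[n]$ gives $\Pr[A_i\cap A_j]=a!\,(b-a)!\,(n-b)!/n!$, so that $\Pr[A_i]=\binom{n-a}{b-a}\Pr[A_i\cap A_j]$ and $\Pr[A_j]=\binom{b}{a}\Pr[A_i\cap A_j]$, and the estimate becomes $\binom{n-a}{b-a}+\binom{b}{a}\ge n$. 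Since $1\le a\le b-1$ and $1\le b-a\le n-a-1$, we have $\binom{b}{a}\ge b$ and $\binom{n-a}{b-a}\ge n-a$, so the left side is at least $(n-a)+b\ge n+1$. Now sum this pairwise estimate over all $\binom{t}{2}$ pairs; since each $H_i$ lies in $t-1$ pairs, $\sum_{i<j}\Pr[A_i\cap A_j]\le\frac{t-1}{n}\sum_i\Pr[A_i]$. As $t\le n$ the coefficient $1-\frac{t-1}{n}$ is nonnegative, so using $\Pr[A_i]\ge\floor{n/2}!\ceil{n/2}!/n!$ finishes the proof.

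I do not anticipate a real obstacle: once set up, this is a routine Bonferroni computation. The single delicate point is that the clean inequality $\binom{n-a}{b-a}+\binom{b}{a}\ge n$ fails precisely for $a=0$, $b=n$, i.e., for the pair $\{\emptyset,[n]\}$ (both coefficients then equal $1$) --- which is exactly why those two sets are removed in the degenerate step, after which every member of $\cH$ has size strictly between $0$ and $n$ and the estimate holds across the board.
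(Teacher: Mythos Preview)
The paper does not actually prove this lemma; it is quoted from Katona--Nagy~\cite{KN2015} without proof. So there is nothing in the paper to compare your argument against.

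Your proof is correct. The Bonferroni bound combined with the pairwise estimate $\Pr[A_i\cap A_j]\le \frac{1}{n}\bigl(\Pr[A_i]+\Pr[A_j]\bigr)$ is exactly the right tool, and your verification that $\binom{n-a}{b-a}+\binom{b}{a}\ge (n-a)+b\ge n+1$ whenever $1\le a<b\le n-1$ is clean and correct (the minimum of $\binom{m}{r}$ over $1\le r\le m-1$ is indeed $m$). The handling of the degenerate cases is also fine: for $t\ge n+1$ the claimed lower bound is nonpositive, and when $\emptyset$ or $[n]$ lies in $\cH$ every full chain meets $\cH$ and your inequality $\frac{t(n+1-t)}{n}\le n\le \nchn$ shows the bound cannot exceed $n!$. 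There is no gap.
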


\begin{proof}[Proof of Theorem~\ref{thm:P5}]
For $n\ge 2$, consider the following family:
$$\cF:=\left\{F\subset [n] ~:~ |F\cap [n-2]|=\left\lfloor\frac{n-2}{2}\right\rfloor\right\}.$$
This family can be divided into $\binom{n-2}{\floor{n/2}-1}$ pairwise incomparable $4$-tuples. Each of these are isomorphic to the Boolean lattice $B_2$, and has $5$ containments. Therefore the  number of containments in $\cF$ is $5\binom{n-2}{\floor{n/2}-1}$.

To prove that for $n$  large enough  a $\cP_5$-free family $\cF$ has at most $5\binom{n-2}{\floor{n/2}-1}$ containments, we use the same strategy as in the proof of Theorem~\ref{thm:N}. We will describe all possible components of the comparability graph $G$, and show that if a component has $c$ containments then its convex hull meets at least
$$\frac{c \cdot n!}{5\cdot \binom{n-2}{\floor{n/2}-1}}=\left(\frac{4}{5}+o(1)\right)\cdot c \cdot \floor{\frac{n}{2}}! \cdot \ceil{\frac{n}{2}}!$$
full chains. There are $n!$ full chains and each of them  meets the convex hull of at most one component, therefore the number of containments is at most $5 \binom{n-2}{\floor{n/2}-1}$.

Before finding all possible components, we describe two types for which the statement is easy to verify. These two types will cover most of our cases.

If a component has at most $c\le 100$ containments, and its convex hull contains at least $c$ elements, then we call it \textit{type I}. Choose $c$ sets from the convex hull. By Lemma \ref{chaincount}, the number of full chains meeting them is at least $(c-o(1))\cdot \floor{n/2}!\cdot \ceil{n/2}!$, which is more than $\left(\frac{4}{5}+o(1)\right)\cdot c \cdot \floor{n/2}! \cdot \ceil{n/2}!$ for large enough $n$.

If a component has $c$ containments (for $c$ of any size) and at least $\frac{5}{6}c$ of its members form an antichain then we call it \textit{type II}. Each of the sets in the antichain meets at least $\floor{n/2}!\cdot\ceil{n/2}!$ full chains, and these are pairwise different. Therefore, a type II  component meets at least $\frac{5}{6}c\cdot \floor{n/2}!\cdot \ceil{n/2}!$ chains, which is more than $\left(\frac{4}{5}+o(1)\right)\cdot c\cdot \floor{n/2}! \cdot \ceil{n/2}!$ for large enough $n$.

Now let us list all possible  components of $G$ and the corresponding set systems in $\cF$. Note that $\cF$ being $\cP_5$-free means that $G$ has no $5$-vertex path.

First,  consider a tree component in $G$. As we work with $\cP_5$-free posets, it contains  no $4$-edge path.
 It is easy to see that this implies that there must be an edge that shares a vertex with all other edges. Let $A\subset B$ be the sets in $\cF$ corresponding to the endpoints of this special edge. If a third set $C\in\cF$ would be a subset of $A$ or a superset of $B$ then $A,B$ and $C$ would span a triangle in $G$, which is not allowed in this case. Therefore all other sets of $\cF$ are either supersets of $A$ or subsets of $B$, so the component is a star. If this component has $t$ sets, then the number of containments is $t-1$ and there is an antichain of size $t-2$, formed by all sets of the component, except for $A$ and $B$. If $t<7$ then the component is of type I. If $t\ge 7$ then it is of type II, since $t-2\ge\frac{5}{6}(t-1).$

From now on, we  assume that the component has a cycle. It cannot be a cycle of length $5$ or more since it would contain a $5$-vertex path. First, assume that the component has a triangle, but no $4$-cycle. A triangle in $G$ corresponds to three sets $A\subset B\subset C$ in $\cF$. If there are no other sets in this component then it is of type I. 

If there is a fourth set $D\in\cF$ such that $B\subset D$ or  $D\subset B$, then $\{A,C,B,D\}$ is a $4$-cycle, which is not allowed in this case. Therefore,  any set of $\cF$ comparable to $\{A,B,C\}$ must be either a superset of $A$ or a subset of $C$. If there are sets $D,E\in\cF$ such that $A\subset D$ and $E\subset C$, then $D,A,B,C,E$ form an $S\in \cP_5$. Note that $D=E$ would create a $4$-cycle, which is not allowed in this case.

We have only two subcases remaining to handle. Either there are some sets $D_1, D_2,\dots, D_k\in \cF$ such that $A\subset D_i$ for all $i$ and $\{D_1, D_2,\dots, D_k\}$ is incomparable to $\{B,C\}$, or there are some sets $E_1, E_2,\dots, E_k\in \cF$ such that $E_j\subset C$ for all $j$ and $\{E_1, E_2,\dots, E_k\}$ is incomparable to $\{A,B\}$. By symmetry, it is enough to consider the first subcase. The sets $\{D_1, D_2,\dots, D_k\}$ must form an antichain, since $D_i\subset D_j$ would mean that $\{B,C,A,D_j,D_i\}$ form an $M\in \cP_5$. Therefore the component has $k+3$ sets, $k+3$ containments, and contains the antichain $\{B,D_1, D_2,\dots, D_k\}$ of size $k+1$. If $k<9$ then the component is of type I. If $k\ge 9$ then it is of type II,  since $k+1\ge \frac{5}{6}(k+3)$.

Finally, we handle the case when  a component contains  a $4$-cycle. If there were a fifth set in $\cF$ comparable to any of the four sets in the cycle in any way then these sets would form a poset in $\cP_5$. Therefore, we can assume that this component consists of  only four sets. If there are only four containments, then the component is of type I. If there are six containments among the four sets then they must form a chain $A\subset B\subset C\subset D$. The convex hull is the set $\{F~:~A\subset F\subset D\}$. By Lemma \ref{sublattice}, it meets  $\frac{n!}{\binom{n-|D|+|A|}{|A|}}$ chains. Since $|D|-|A|\ge 3$, this is at least $\frac{n!}{\binom{n-3}{\floor{(n-3)/2}}}$, which is more than $\frac{6\cdot n!}{5\cdot\binom{n-2}{\floor{n/2}-1}}$ for $n\geq 3$.

The last subcase has five containments among the four sets. This can be achieved in three different ways:
\begin{itemize}
    \item $A\subset B\subset C,D$. In this case, the size of the convex hull is at least five since it also contains at least one set $E$ such that $E\not=B$ and $A\subset E\subset D$. With five containments and a convex hull of size at least five, the component is of type I.
    \item $A,B\subset C\subset D$. This case can be handled similarly to the previous one.
    \item $A\subset B,C\subset D$. The convex hull of this component is the set $\{F~:~A\subset F\subset D\}$.
    By Lemma \ref{sublattice}, it meets $\frac{n!}{\binom{n-|D|+|A|}{|A|}}$ full chains. Since $|D|-|A|\ge 2$ must hold, the minimum of this quantity is $\frac{n!}{\binom{n-2}{\floor{n/2}-1}}=\frac{5\cdot n!}{5\cdot\binom{n-2}{\floor{n/2}-1}}$.
\end{itemize}

Thus, every component with $c$ containments has a convex hull that meets at least $\frac{c \cdot n!}{5\cdot \binom{n-2}{\floor{n/2}-1}}$ full chains, as desired.

\end{proof}

The following important result of Bukh~\cite{Bukh} will be used for proving Theorem~\ref{thm:P6}.

\begin{theorem}[Bukh \cite{Bukh}]\label{bukh}
    For every poset $T$ with a tree Hasse diagram, we have $La(n,T)=(h(T)-1+o(1))\cdot \binom{n}{\lfloor n/2\rfloor}$, where $h(T)$ denotes the height of $T$.
\end{theorem}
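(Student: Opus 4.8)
The plan is to prove matching bounds, reserving essentially all the work for the leading constant of the upper bound.

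\textbf{Lower bound.} Let $\cF_0$ be the union of the $h(T)-1$ most central layers $\binom{[n]}{k},\dots,\binom{[n]}{k+h(T)-2}$ with $k\approx (n-h(T))/2$. Since $T$ has height $h(T)$, it contains a chain of $h(T)$ elements as a subposet, whereas every full chain meets $\cF_0$ in at most $h(T)-1$ sets; hence $\cF_0$ is $T$-free. Each central layer has size $(1+o(1))\nchn$, so $|\cF_0|=(h(T)-1+o(1))\nchn$, which is the claimed lower bound.

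\textbf{Upper bound: reduction to the sharp constant.} The \emph{crude} estimate $|\cF|=O_T(\nchn)$ is easy: every tree poset is a subposet of some $K_{s,t}$, so a $T$-free family is $K_{s,t}$-free, and the coloring/critical-pair argument of \tref{Kst} (run with $k=1$ to count sets rather than $k$-chains) already yields $O_T(\nchn)$. All the difficulty lies in pinning the constant to $h(T)-1$. The natural engine is the LYM inequality: any antichain $\cA$ satisfies $|\cA|\le\nchn$ because each full chain meets $\cA$ at most once. If one could discard a set of size $o(\nchn)$ and partition the rest of $\cF$ into $h(T)-1$ antichains, the theorem would follow. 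The obstruction is that $T$-freeness does \emph{not} bound chain lengths inside $\cF$: a single long chain is $T$-free for every non-chain $T$ yet meets many full chains repeatedly. Thus Mirsky's theorem is unavailable and a per-chain bound on $|\cF\cap C|$ is hopeless — copies of a non-chain tree arise only from \emph{branching} across several chains, never from one long chain, and this is exactly what must be exploited.

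\textbf{Proposed mechanism.} I would induct on $|T|$, using that a tree has a leaf. By the up--down symmetry of $2^{[n]}$ we may assume the chosen leaf $\ell$ is maximal in $T$, attached to a unique parent $p\lessdot\ell$; set $T'=T-\ell$, so $|T'|<|T|$ and $h(T')\in\{h(T)-1,h(T)\}$. Call $F\in\cF$ \emph{rich} if it has at least $K=K(T)$ pairwise incomparable supersets in $\cF$, and \emph{poor} otherwise. The point of richness is that such an $F$ can serve as the image of $p$ and then be extended upward to an image of $\ell$ avoiding the boundedly many already-used sets and the forbidden comparabilities with the siblings of $\ell$ (possible once $K$ exceeds a function of $|T|$, by pigeonhole among the $K$ incomparable supersets). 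Consequently the subfamily of rich sets is $T'$-free, and induction bounds it by $(h(T')-1+o(1))\nchn$. The poor sets have bounded ``upward branching'' and must be married to the LYM engine and to the top antichain of $\cF$ so that, overall, the budget rises by exactly one antichain's worth precisely when $h(T')=h(T)-1$.

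\textbf{Main obstacle.} The crux is governing the poor sets (and their interaction with the rich sets) with the \emph{sharp} constant rather than a mere $O_T(\nchn)$, and here the naive bookkeeping fails: when $T=\vee_2$, a full middle antichain is entirely poor, so ``rich'' and ``poor'' must \emph{share} the budget rather than contribute $h(T')-1$ and $1$ separately. The honest remedy is a genuinely tree-aware stratification: assign each $F\in\cF$ a rank equal to the length of the longest chain below $F$ in $\cF$ that is ``branchingly realizable'' (completable to the lower part of $T$ within $\cF$), prove via $T$-freeness that this rank never exceeds $h(T)-1$, and show each rank class is an antichain up to a set of size $o(\nchn)$ so that LYM applies class by class. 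Forcing the error terms to be genuinely $o(\nchn)$ — not $O_T(\nchn)$ — is where Bukh's real work concentrates and is what compels $n$ to be large relative to $|T|$; this is also why the conclusion is only asymptotic and why the constant is exactly $h(T)-1$.
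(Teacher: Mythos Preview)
The paper does not prove this theorem: it is quoted from Bukh~\cite{Bukh} and used as a black box in the proof of Theorem~\ref{thm:P6}. So there is no ``paper's own proof'' to compare your proposal against.

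As for the proposal itself, it is not a proof but a discussion. The lower bound is fine. For the upper bound you correctly identify the essential difficulty --- that a $T$-free family can contain arbitrarily long chains, so Mirsky-type arguments and per-chain counting fail --- and you correctly locate the leverage in branching behavior. But your proposed rich/poor induction is left unexecuted, and in your ``Main obstacle'' paragraph you yourself explain why the naive version fails (for $T=\vee_2$ the entire middle level is poor). The ``honest remedy'' you then describe --- a rank function measuring the longest branchingly-realizable chain below each set, with each rank class an antichain up to $o(\nchn)$ --- is stated as a wish, not proved: you do not define ``branchingly realizable'' precisely, do not prove the rank is bounded by $h(T)-1$, and do not prove the near-antichain property of the rank classes. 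Ending with ``this is where Bukh's real work concentrates'' is an acknowledgment that the argument is missing, not a substitute for it.

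If you want to actually prove the theorem, Bukh's argument does proceed by induction on $|T|$ via a leaf, and the core quantitative tool is a Lubell-type averaging over random full chains together with an analysis of how many sets on a chain can have small upward (or downward) branching; the point is to show that the Lubell mass of a $T$-free family is at most $h(T)-1+o(1)$. Your sketch is in the right neighborhood but stops well short of the finish line.
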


%\vskip 0.5truecm

\begin{proof}[Proof of Theorem~\ref{thm:P6}]
To prove the first inequality, let $\cF=\{\emptyset\}\cup{\binom{[n]}{\floor{n/2}}}\cup \{[n]\}$. This family contains neither $M$ nor $W$. The number of containments in $\cF$ is $2\nchn+1$.

To prove the second inequality, assume that we have six pairwise different sets $A_1, A_2,\dots, A_6$ such that $A_i\subset A_{i+1}$ or $A_i\supset A_{i+1}$ holds for all $1\le i \le 5$. It can be checked that one can select five of these sets forming an $M$ or a $W$. Indeed, let $m$ be the most number of consecutive $A_i$s that form a chain, so $A_i\subset A_{i+1}\subset  \ldots \subset A_{i+m-1}$ or $A_i\supset A_{i+1}\supset \ldots \supset A_{i+m-1}$.
 If $m\ge 5$, then a $5$-chain is both an $M$ and a $W$. 
 If $m=2$, then one of $\{A_1,A_2,\dots, A_5\}$ and $\{A_2,A_3,\dots,A_6\}$ forms an $M$ and the other forms a $W$. If $m=4$, then by symmetry we can assume $A_i \subset A_{i+1}\subset A_{i+2}\subset A_{i+3}$. If $i\ge 2$, then $\{A_{i-1},A_i,A_{i+2},A_{i+1},A_{i+3}\}$ forms a $W$, otherwise $\{A_i,A_{i+2},A_{i+1},A_{i+3},A_{i+4}\}$ forms an $M$. Finally, if $m=3$, then again we can assume $A_i \subset A_{i+1}\subset A_{i+2}$. %If $A_{i+3}\subset A_{i+2},A_{i+4}$, then $A_{i+1},A_i,A_{i+2},A_{i+3},A_{i+4}$ form a $W$. Similarly, if $A_{i-2},A_i\subset A_{i-1}$, then $A_{i-2},A_{i-1},A_i,A_{i+2},A_{i+1}$ form an $M$. So either $A_{i-2}\supset A_{i-1}\supset A_i\subset A_{i+1}\subset A_{i+2}$ or $A_i\subset A_{i+1}\subset A_{i+2}\supset A_{i+3}\supset A_{i+4}$. In both cases, one can easily find a $W$ or an $M$ with the help of the sixth $A_j$.
If $i\le 2$ then we have $A_{i+2}\supset A_{i+3}$ by the maximality of the chain $A_i \subset A_{i+1}\subset A_{i+2}$. If $A_{i+3}\subset A_{i+4}$ then $\{A_{i+1},A_{i},A_{i+2},A_{i+3},A_{i+4}\}$ forms a $W$. If $A_{i+3}\supset A_{i+4}$ then $\{A_{i+1},A_{i},A_{i+2},A_{i+4},A_{i+3}\}$ forms a $W$. If $i\ge 3$ then we can similarly conclude that $A_{i-1}\supset A_i$ and either $\{A_{i-2},A_{i-1},A_{i},A_{i+2},A_{i+1}\}$ or $\{A_{i-1},A_{i-2},A_{i},A_{i+2},A_{i+1}\}$ forms an $M$.
This means that a family that avoids both $M$ and $W$ will avoid all posets in $\cP_6$, proving the inequality.

To prove the last relation, assume that $\cF \subset 2^{[n]}$ is a $\cP_6$-free family. It means that its comparability graph $G$ contains no $6$-vertex path. A theorem of Erd\H{o}s and Gallai~\cite{EG} states that in this case $|E(G)|\le 2|V(G)|$. Therefore the number of containments in $|\cF|$ is at most $2|\cF|$. Since $\cF$ avoids the $6$-element alternating path poset (a tree poset of height two), Theorem \ref{bukh} implies $|\cF|\le (1+o(1))\nchn$, completing the proof. 
\end{proof}

\section{Open problems}
\label{sec:open}

Determining the exact or asymptotic value of any $\La(n,P,\#Q)$ would be interesting, but there are two natural problems that arise from our current results and previous findings from \cite{GKP} and \cite{GMNPV}.

\begin{problem}
    Determine the largest possible order of magnitude of $\La(n,P,\#P_k)$ over all posets of height at most $\ell\le k$. 
    The case $\ell=2$ is settled for arbitrary values of $k$: \tref{Kst} gives the upper bound $La(n,P,\#P_2)=O_k\left(n\cdot \binom{n}{\lfloor n/2\rfloor}\right)$, while the family 
    $$\cF_k:=\left\{F\in 2^{[n]}:\left|F\cap [n-k+2]\right|=\binom{n}{\lfloor n/2\rfloor} ~\text{or}\ \left|F\cap [n-k+2]\right|=\binom{n}{\lfloor n/2\rfloor}+1\right\}$$ 
    is $K_{s,s}$-free for large enough $s$ and contains $\Omega_k\left(n\cdot \binom{n}{\lfloor n/2\rfloor}\right)$ chains of length $k$.
\end{problem}

Let us repeat some observations on posets of height $2$ and chains of size $2$. It is clear that if the family consisting of the two middle levels does not contain any copy of $P$, then we have $\La(n,P,\#P_2)\ge \lceil \frac{n}{2}\rceil\cdot \nchn$. As mentioned in the introduction, $\La(n,T,\#P_2)=O_T\left(\nchn\right)$ was proved in~\cite{GMNPV}.

\begin{problem}
Does there exist a poset $P$ contained in $\binom{[n]}{\lfloor n/2\rfloor}\cup \binom{[n]}{\lfloor n/2\rfloor+1}$ such that $\La(n,P,\#P_2)=\omega\left(\nchn\right)$?
\end{problem}

Observe that $\bfly$ is a poset for which the value of $\La(n,\bfly,\#P_k)$ is determined for all values of $k$. The ordinary forbidden subposet problem, case $k=1$, was solved by DeBonis, Katona, and Swanepoel~\cite{DKS}. The case $k=2$ is settled by Theorem~\ref{thm:K22}, the case $k=3$ is an easy proposition by Gerbner, Keszegh, and Patk\'os~\cite{GKP}, while for all $k\ge 4$ the value is zero as a 4-chain contains $\bfly$. It would be interesting to find other posets $P$ for which all values $La(n,P,\#P_k)$ can be determined. 

\begin{problem}
Determine $\La(n,\Diamond_3,\#P_k)$ for $k=2,3,4$, where $\Diamond_3$ is the poset on 5 elements $a,b_1,b_2,b_3,c$ with $a<b_i<c$ for all $i=1,2,3$. The case $k=1$ was solved by Griggs, Li, and Lu~\cite{GLL}.
\end{problem}


\begin{thebibliography}{99}

\bibitem{AS}
\textsc{N. Alon, C. Shikhelman}. Many $T$ copies in $H$-free graphs. \textit{J. Combin. Theory Ser. B} \textbf{121} (2016), 146--172.

%\bibitem{BM}
%\textsc{J. Balogh, R.R. Martin}, On generalized Turán results in height two posets, arXiv manuscript,  arXiv:2108.08898

\bibitem{Bukh}
\textsc{B. Bukh.} Set families with a forbidden subposet, \emph{Electron. J. Combin.}, \textbf{16} (2009), R142, 11pp.

\bibitem{DKS}       
\textsc{A. De Bonis, G.O.H. Katona, K.J. Swanepoel}. Largest family without $A\cup B\subseteq C\cap D$. \textit{J. Combin. Theory Ser. A}, \textbf{111}(2) (2005), 331--336. 

\bibitem{DG2015}
\textsc{A.P. Dove, J.R. Griggs}. Packing posets in the Boolean lattice. \textit{Order}, \textbf{32}(3) (2015), 429-438.

\bibitem{E1945} \textsc{P. Erd\H os}. On a lemma of Littlewood and Offord. \textit{Bulletin of the American Mathematical Society}, \textbf{51}(12), 898--902, 1945.

\bibitem{EG}
\textsc{P. Erd\H os, T. Gallai}. On maximal paths and circuits of graphs. \textit{Acta Math. Acad. Sci. Hungar.} \textit{10} (1959) 337--356.

\bibitem{ES1946}
\textsc{P. Erd\H os, A.H. Stone}. On the structure of linear graphs. \textit{Bull. Amer. Math. Soc.} \textbf{52} (1946), 1087--1091.

\bibitem{ES1966}
\textsc{P. Erd\H os, M. Simonovits}. A limit theorem in graph theory. \textit{Studia Sci. Math. Hungar.} \textbf{1} (1966), 51--57.

\bibitem{GKP}
\textsc{D. Gerbner, B. Keszegh, B. Patk\'os.} Generalized forbidden subposet problems, \textit{Order}, \textbf{37}(2) (2020) 389--410.

\bibitem{GMNPV}
\textsc{D. Gerbner, A. Methuku, D.T. Nagy, B. Patk\'os, M. Vizer},  On the number of containments in P-free families. \textit{Graphs Combin.}, \textbf{35}(6) (2019), 1519--1540.

\bibitem{GP2008} \textsc{D. Gerbner, B. Patk\'os.} $l$-chain profile vectors. \textit{SIAM J. Discrete Math.}, \textbf{22}(1), (2008) 185--193.

\bibitem{GP2019} \textsc{D. Gerbner, B. Patk\'os.} \textit{Extremal Finite Set Theory}, CRC Press, Boca Raton, FL, 2019.

\bibitem{GLi}
\textsc{J.R. Griggs, W-T. Li}. Progress on poset-free families of subsets. \textit{Recent trends in combinatorics}. Springer, Cham. (2016), 317--338.

\bibitem{GLL}
\textsc{J.R. Griggs, W-T. Li, L. Lu}.  Diamond-free families. \textit{Journal of Combinatorial Theory, Series A}, \textbf{119}(2) (2012), 310--322.

\bibitem{GL}
\textsc{J.R. Griggs, L. Lu}. On families of subsets with a forbidden subposet. \textit{Combin. Probab. Comput.}, \textbf{18}(5) (2009), 731--748.

\bibitem{GMT}
\textsc{D. Gr\'osz, A. Methuku, C. Tompkins}. An upper bound on the size of diamond-free families of sets. \textit{J.~Combin. Theory Ser. A}, \textbf{156} (2018), 164--194.

\bibitem{K1973} \textsc{G.O.H. Katona}. Two applications of Sperner type theorems (for search theory and truth functions). \textit{Period. Math. Hungar.}, \textbf{3} (1973), 19--26.

\bibitem{KN2015}
\textsc{G.O.H. Katona, D.T. Nagy}.  Incomparable copies of a poset in the Boolean lattice. \textit{Order}, \textbf{32}(3) (2015), 419--427.

\bibitem{KT}
\textsc{G.O.H. Katona, T. Tarj\'an}. Extremal problems with excluded subgraphs in the $n$-cube, Graph Theory, Lagow, 1981, Lecture Notes in Math. 1018 (Springer-Verlag, Berlin, 1983) 84--93.

\bibitem{P2009} 
\textsc{B. Patk\'os}. The distance of $\cF$-free hypergraphs. \textit{Studia Scientiarum Mathematicarum Hungarica}, \textbf{46}(2) (2009), 275--286.

\bibitem{S1928} \textsc{E. Sperner}. Ein Satz \"uber Untermengen einer endlichen Menge. \textit{Mathematische Zeitschrift}, \textbf{27}(1) (1928), 544--548.

\bibitem{T1941} \textsc{P. Tur\'an}. Eine Extremalaufgabe aus der Graphentheorie. (Hungarian) \textit{Mat. Fiz. Lapok} \textbf{48} (1941), 436–452.

\end{thebibliography}
\end{document}